\renewcommand\eqref[1]{(\ref{#1})} 
 \newtheorem{thm}{Theorem}[section]
 \newtheorem{cor}[thm]{Corollary}
 \newtheorem{lem}[thm]{Lemma}
 \newtheorem{prop}[thm]{Proposition}
 \theoremstyle{definition}
 \newtheorem{defn}[thm]{Definition}
 \theoremstyle{remark}
 \newtheorem{rem}[thm]{Remark}
 \newtheorem*{ex}{Example}
 \numberwithin{equation}{section}
\newcommand{\half}{\frac{1}{2}}
\newcommand{\sdual}{\frac{1}{p}+\frac{1}{q}=1}
\newcommand{\ene}{\mathbb{N}}
\newcommand{\bba}{\mathcal{B}}
\newcommand{\er}{\mathbb{R}}
\newcommand{\ce}{\mathbb{C}}
\newcommand{\zet}{\mathbb{Z}}
\newcommand{\zn}{\mathbb{Z}^n}
\newcommand{\tn}{\mathbb{T}^n}
\newcommand{\T}{\mathbb{T}^1}
\newcommand{\To}{\mathbb{T}}
\newcommand{\fou}{\mathcal{F}}
\newcommand{\bi}{\begin{itemize}}
\newcommand{\ei}{\end{itemize}}
\newcommand{\be}{\begin{enumerate}}
\newcommand{\ee}{\end{enumerate}}
\newcommand{\beq}{\begin{equation}}
\newcommand{\eq}{\end{equation}}
\newcommand{\Dcal}{\mathcal{D}}
\newcommand{\Dp}{\mathcal{D}_p}
\def\p#1{{\left({#1}\right)}}
\def\Hcal{{\mathcal H}}
\DeclareMathOperator{\Tr}{Tr}
\DeclareMathOperator{\Det}{Det}
\def\Rn{{{\mathbb R}^n}}
\def\Tn{{{\mathbb T}^n}}
\def\Zn{{{\mathbb Z}^n}}
\def\T{{{\mathbb T}^1}}
\def\N{{{\mathbb N}}}
\def\SU2{{{\rm SU(2)}}}
\def\lapsu2{{{\mathcal L}_\SU2}}
\begin{document}
\setcounter{page}{1}

\title[Determinants and Plemelj-Smithies formulas]
 {Determinants and Plemelj-Smithies formulas}

\author[Duv\'an Cardona]{Duv\'an Cardona}

\address{%
	Ghent University\\
	Department of Mathematics: Analysis, Logic and Discrete Mathematics\\
	Krijgslaan 281, Building S8 \\
	B 9000 Ghent\\
	Belgium}

\email{duvan.cardonasanchez@ugent.be}

\author[Julio Delgado]{Julio Delgado}

\address{%
	Universidad del Valle
	Departamento de Matem\'aticas\\
	Calle 13 100-00\\
	Cali-Colombia}
\email{delgado.julio@correounivalle.edu.co}

\author[Michael Ruzhansky]{Michael Ruzhansky}

\address{%
	Ghent University\\
	Department of Mathematics: Analysis, Logic and Discrete Mathematics\\
	Krijgslaan 281, Building S8 \\
	B 9000 Ghent\\
	Belgium}

\email{Michael.Ruzhansky@ugent.be}

\address{%
	Queen Mary University of London\\
	School of Mathematical Sciences\\
	Mile End Road\\
	London E1 4NS\\
	United Kingdom}
\email{m.ruzhansky@qmul.ac.uk}

 \allowdisplaybreaks

\subjclass[2020]{Primary 47B10, 35S05; Secondary 58J40, 22E30.}

\keywords{Fredholm determinants, Plemelj-Smithies formulas, trace formula,  pseudo-differential operators,  Schatten-von Neumann classes, eigenvalues, Vector Bundles.
}

\date{\today}
\thanks{The authors are supported  by the FWO  Odysseus  1  grant  G.0H94.18N:  Analysis  and  Partial Differential Equations. The second author was also supported by the Grant CI71234 Vic. Inv. Universidad del Valle and by Leverhulme Grant RPG-2017-151. The third author is also supported by the Leverhulme Grant RPG-2017-151 and by EPSRC grant 
EP/R003025/1.}

\begin{abstract}
We establish Plemelj-Smithies formulas for determinants  in different algebras of operators. In particular we define a Poincar\'e type determinant for operators on the torus $\Tn$ and deduce formulas
 for determinants of periodic pseudo-differential operators in terms of the symbol. On the other hand, 
by applying a recently introduced notion of invariant operators  relative to
 fixed decompositions of Hilbert spaces we also obtain formulae for determinants with respect to the trace class. The analysis  makes use of the corresponding notion of full matrix-symbol. We also derive explicit formulas for determinants associated to elliptic operators on compact manifolds, compact Lie groups, and on homogeneous vector bundles over compact homogeneous manifolds. 
\end{abstract}

\maketitle
\tableofcontents
\allowdisplaybreaks
\section{Introduction}
In this work we study determinants of operators in different algebras and  obtain the corresponding  Plemelj-Smithies formulas. A special case that we will consider is a   Poincar\'e type determinant for operators on the torus $\Tn$. To do so we apply the Fourier transform on $\Tn$ leading to an isomorphism between operators on  $\Tn$ and operators on $\Zn$. By considering pseudo-differential operators on the torus we also obtain formulae in terms of symbols. Regarding compact manifolds we apply a recently introduced notion of invariance (cf. \cite{dr14a:fsymbsch}). This notion  is relative to a fixed decomposition of a complex separable Hilbert space. In particular one can consider such decompositions  into eigenspaces associated to an elliptic operator on a compact manifold or on a vector bundle over a compact manifold. The classes of operators covered in this work also include $G$-invariant operators on homogeneous vector bundles allowing applications of the invariant notion in \cite{dr14a:fsymbsch} to Dirac type operators on the bundle of forms, on some spin structures and on some twisted geometric structures (cf. \cite{Bott65}).\\

Some of the Plemelj-Smithies formulas for determinants that we are going to state here can be seen as complementary to the ones stated for traces in  \cite{dr:suffkernel}, \cite{dr14a:fsymbsch}, \cite{dr13:schatten} and \cite{dr13a:nuclp}. The determinants of matrices of infinite order were used for the first time by G. W. Hill in his study of the motion of  the lunar perigee (cf. \cite{hill1:cl}). 
 A first rigorous approach to such determinants was introduced by Poincar\'e in his work on the Hill's equation  \cite{poi:hill}.  A general point of view to define the determinant of $I+A$ consists in considering $A$ as an operator in a class endowed with a trace.  In particular the Plemelj-Smithies formulas express the determinant in terms of traces. Here we are going to consider the point of view of embedded algebras introduced by I. Gohberg, S. Goldberg and N. Krupnik (cf. \cite{goh:trace}). Here we will work within that  framework and specifically the important examples of trace class operators on a Hilbert space, the algebra of matrices in $\ell^1(\zet\times\zet)$ viewing them as operators on $\ell^1(\zet)$ and domain for the Poincar\'e determinant, and the nuclear operators on Banach spaces.  \\
 
The applications of Fredholm determinants in the  analysis of differential equations has actively motivated its investigation in the last decades 
 see e.g. \cite{Zhao-Barnett,Bothner-Its:CMP-2014, Borodin-Corwin-Remenik,Gesztesy-Latushkin-Zumbrun,McKean:CPAM-2003}.
 Furthermore, a systematic study of numerical computations for Fredholm determinants  was initiated by Bornemann \cite{Bornemann:MC-2010}.
  We refer to these papers for further
references and motivations. \\

The structure of this work is as follows. In Section \ref{sec2} we present the notion of a determinant on Banach spaces, the notion of $r$-nuclearity of Grothendieck and the calculus of invariant operators developed in \cite{dr14a:fsymbsch}. Finally, in Section \ref{sec3} we establish our main results, first, for pseudo-differential operators on the torus, invariant operators on Hilbert spaces, trace class operators in $L^2(M)$ (for $M$ a closed manifold), and at the end of the section for trace class $G$-invariant operators on homogeneous vector-bundles.

 
\section{Traces and Determinants}\label{sec2}
In this section we define the concepts of  traces and determinant in the setting of  embedded algebras according to \cite{goh:trace} and we refer the reader to it for a more detailed discussion. \\

Let  $\mathcal{B}$ be a Banach space, we denote by  $\mathcal{F}(\bba)$ the space of bounded finite rank operators on  $\bba$.  
  We also denote by $\mathcal{L}(\bba)$ the $C^*$-algebra of bounded linear operators on $\bba$.  
 We now briefly recall the definition of trace and determinants for finite rank operators. The following fundamental properties of trace and determinants for finite rank operators are mainly consequences of the ones for the finite square matrix setting. 
  
\begin{lem} Let $\bba$ be a Banach space and $F\in \mathcal{F}(\bba)$. Then  $\bba$ can be decomposed as a direct sum
	\beq B=M_F\oplus N_F ,\label{dmnfg}\eq
	where $F(M_F)\subset M_F$ and $N_F\subset Ker F.$
\end{lem}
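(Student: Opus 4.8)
The plan is to exploit the finite rank of $F$ to reduce the claim to elementary linear algebra on a finite-dimensional subspace. First I would consider the range $R_F := F(\bba)$, which is a finite-dimensional (hence closed) subspace of $\bba$ since $F$ is a bounded finite rank operator. The key observation is that $F$ maps $\bba$ into $R_F$, so the restriction $F|_{R_F} : R_F \to R_F$ is a linear endomorphism of a finite-dimensional space, and the whole behaviour of $F$ is governed by this restriction together with the projection onto $R_F$.

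Next I would analyse the operator $F|_{R_F}$ on the finite-dimensional space $R_F$ using the Fitting decomposition (the generalized kernel--generalized range splitting for an endomorphism of a finite-dimensional vector space). Concretely, for $k$ large enough (say $k = \dim R_F$) one has $R_F = \ker(F|_{R_F}^k) \oplus \operatorname{ran}(F|_{R_F}^k)$, where $F$ acts invertibly on the second summand and nilpotently on the first. Set $M_F := \operatorname{ran}(F|_{R_F}^k) \subseteq R_F$. Then $F(M_F) = M_F \subseteq M_F$, so the invariance requirement $F(M_F)\subset M_F$ holds. It remains to produce a complementary closed subspace $N_F$ with $\bba = M_F \oplus N_F$ and $N_F \subset \operatorname{Ker} F$.

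To build $N_F$, I would first note that $M_F$ is finite-dimensional, hence admits a topological complement $N_F'$ in $\bba$; but this crude complement need not lie in $\operatorname{Ker} F$, so one must correct it. The natural correction uses a projection: since $F|_{R_F}$ restricted to $M_F$ is invertible, there is a bounded projection $P$ of $\bba$ onto $M_F$ defined by $P := (F|_{M_F})^{-1} \circ F$ composed appropriately — more precisely one checks that $(F|_{M_F})^{-1} F$ is a bounded idempotent with range $M_F$, because $F\bba \subseteq R_F$ and on $R_F$ the map $F$ preserves the Fitting decomposition. Then set $N_F := \operatorname{Ker} P$. One verifies $\bba = M_F \oplus N_F$ from idempotency, and for $x \in N_F$ we get $F x \in M_F$ while also $(F|_{M_F})^{-1} F x = P x = 0$, whence $F x = 0$; thus $N_F \subset \operatorname{Ker} F$, as required.

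The main obstacle — really the only subtle point — is verifying that the candidate $P = (F|_{M_F})^{-1} F$ is genuinely a projection onto $M_F$, i.e. that $P^2 = P$ and $\operatorname{ran} P = M_F$. This hinges on the compatibility $F(M_F) = M_F$ together with the fact that on the nilpotent part of the Fitting decomposition of $R_F$ the operator $F$ eventually vanishes under the relevant powers, so that applying $F$ and then $(F|_{M_F})^{-1}$ lands one back in $M_F$ and acts as the identity there. Everything else — closedness of the finite-dimensional pieces, boundedness of $P$, the direct sum decomposition — is routine. I would therefore organize the write-up as: (1) reduce to $R_F$; (2) invoke the Fitting/Jordan decomposition on $R_F$ to define $M_F$; (3) exhibit the bounded idempotent $P$ and set $N_F = \operatorname{Ker} P$; (4) check $F(M_F)\subset M_F$ and $N_F \subset \operatorname{Ker} F$.
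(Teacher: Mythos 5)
Your choice of $M_F$ is too small, and this breaks the argument irreparably. If $\bba=M_F\oplus N_F$ with $N_F\subset\ker F$, then $F(\bba)=F(M_F)+F(N_F)=F(M_F)\subset M_F$, so any admissible $M_F$ must contain the \emph{whole} range $R_F=F(\bba)$; moreover $M_F+\ker F\supset M_F+N_F=\bba$ forces $\dim M_F\geq\operatorname{codim}\ker F=\operatorname{rank}F$. Your $M_F=\operatorname{ran}\bigl(F|_{R_F}^{\,k}\bigr)$ is only the stable (invertible) part of the Fitting decomposition and discards the nilpotent part entirely. Concretely, take $Fx=\phi(x)\,e_1$ with $\phi\in\bba'$, $\phi(e_1)=0$ and $\phi\neq 0$: then $F\neq 0$ but $F|_{R_F}=0$, so your $M_F=\{0\}$, and the required decomposition $\bba=\{0\}\oplus N_F$ with $N_F\subset\ker F=\ker\phi\subsetneq\bba$ is impossible. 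The same defect surfaces in your projection: $(F|_{M_F})^{-1}F$ is not defined on $\bba$, because $Fx$ lies in $R_F$ and generally has a nonzero component in the nilpotent summand of $R_F$, which is outside the domain of $(F|_{M_F})^{-1}$; a single application of $F$ does not annihilate that component. Your subsequent claim that $x\in N_F$ gives $Fx\in M_F$ fails for the same reason.

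The repair is to enlarge $M_F$ rather than shrink it. Write $Fx=\sum_{j=1}^{n}\phi_j(x)y_j$ with $\{\phi_j\}\subset\bba'$ and $\{y_j\}\subset\bba$ each linearly independent, pick $x_1,\dots,x_n$ with $\phi_i(x_j)=\delta_{ij}$, and set $M_F:=\operatorname{span}\{x_1,\dots,x_n,y_1,\dots,y_n\}$. Then $F(M_F)\subset\operatorname{span}\{y_j\}\subset M_F$, and $Px=\sum_j\phi_j(x)x_j$ is a bounded projection with $\ker P=\bigcap_j\ker\phi_j=\ker F$, so $\bba=\operatorname{span}\{x_j\}\oplus\ker F$ and $M_F=\operatorname{span}\{x_j\}\oplus(M_F\cap\ker F)$. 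Choosing a closed complement $N_F$ of the finite-dimensional subspace $M_F\cap\ker F$ inside the Banach space $\ker F$ yields $\bba=M_F\oplus N_F$ with $N_F\subset\ker F$, as required. Your reduction to $R_F$ and the use of the Fitting decomposition are not needed; the lemma is about capturing a complement of $\ker F$ together with the range, not about isolating the invertible part of $F$.
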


The decomposition \eqref{dmnfg} allows us to write the operators $F$ and $I+F$ as $2\times 2$ matrices of the form
\beq F=\begin{bmatrix} 
	F_1 & 0 \\
	0 & 0 
\end{bmatrix},\,\, I+F=\begin{bmatrix} 
I+F_1 & 0 \\
0 & I_2 
\end{bmatrix}.
\label{mm35}\eq
Here $F_1=F|_{M_F}$ is the restriction of $F$ to the finite dimensional subspace $M_F$ and $I, I_1, I_2$ denote the identity operators in $\bba, M_F, N_F$ respectively. Since  $M_F$ is a finite dimensional space, the functionals $\Tr{F_1}$ and $\Det(I+F_1)$ are well defined. Using this we can define:
\beq \Tr(F):=\Tr(F_1), \,\, \Det (I+F):=\Det(I+F_1).\label{ddtrf5}
\eq
These definitions are independent of the choice of the subspace $M_F$. Indeed, this is due to the following important formulas in terms of eigenvalues:
\beq \Tr(F_1)= \sum\limits_{j=1}^m \lambda_j(F_1),\,\, \Det(I+F_1)=\prod_{j=1}^{m}(1+\lambda_j(F_1)) ,  
\eq
where $m=\dim M_F$ and $\lambda_1(F_1),\dots,\lambda_m(F_1)$ are the eigenvalues of $F_1$ counted according to their algebraic multiplicites. Moreover, an application of \eqref{mm35} and the Jordan decomposition of $F_1$ shows that the nonzero eigenvalues of $F$ do not depend on the choice of $M_F$ and  
\beq \Tr(F)= \sum\limits_{j=1}^m \lambda_j(F),\,\, \Det(I+F)=\prod_{j=1}^{m}(1+\lambda_j(F)).  
\eq
Therefore the Definition \eqref{ddtrf5} is  independent of the choice of the subspace $M_F$. Moreover, we see that $I+F$ is invertible in $\mathcal{L}(\bba)$ if and only if $\Det(I+F)\neq 0$.

The functionals $\Tr(\cdot)$ and $\Det(\cdot)$ enjoy some fundamental properties:
\begin{itemize}
    \item The trace  $\Tr(\cdot)$ is linear on $\mathcal{F}(\bba)$ and 
\beq\Tr(AB)=\Tr(BA).\label{comtr}\eq
\item The determinant  has the following multiplicative property
\beq \Det((I+A)(I+B))=\Det(I+A)\Det(I+B)
\eq
and 
\beq  \Det(I+AB)=\Det(I+BA).
\eq
\end{itemize}

In order to extend the trace and determinants to larger classes of operators, we recall the notion of algebras that we are going to use. We note that the functionals trace and determinant are not continuous with respect to the operator norm. This fact motivates 
 the introduction of the following concept of algebra. 

\begin{defn} A subalgebra $\mathcal{D}$ of $\mathcal{L}(\bba)$ is {\em  continuously embedded } in $\mathcal{L}(\bba)$ if the following two conditions hold:\\
	
	(i) There exists a norm $\|\cdot\|_{\mathcal{D}}$ on $\mathcal{D}$ and a constant $C>0$ such that
	\beq\|A\|_{\mathcal{L}(\bba)}\leq \|A\|_{\mathcal{D}}\eq
	for all $A\in \mathcal{D}$.\\
	
	(ii) $\|AB\|_{\mathcal{D}}\leq \|A\|_{\mathcal{D}}\|B\|_{\mathcal{D}}$ for all  $A, B\in \mathcal{D}$.
	\end{defn}

For the sake of simplicity, we say that the subalgebra $\mathcal{D}$ is an {\em embedded subalgebra} if the norm on  $\mathcal{D}$ satisfies (i) and (ii).\\

If, in addition, $\mathcal{F}_{\mathcal{D}}:=\mathcal{F}\bigcap\mathcal{D}$ is dense in $\mathcal{D}$ with respect to the norm $\|\cdot\|_{\mathcal{D}}$, we say that $\mathcal{D}$ is {\em approximable}. \\

If $\mathcal{D}$ is an approximable algebra we can continuously extend the trace and determinant from finite rank operators to the algebra $\mathcal{D}$. 

\begin{thm}\label{th2eqr} Let $\mathcal{D}\subset\mathcal{L}(\bba)$ be an approximable embedded subalgebra. The following statements are equivalent:

(i) The function $\Det(I+F):\mathcal{F}_{\mathcal{D}}\rightarrow\ce$ admits a continuous extension in the $\mathcal{D}$-norm from $\mathcal{F}_{\mathcal{D}}$ to $\mathcal{D}$.\\

 (ii) The linear functional $\Tr(F) $ is bounded in the $\mathcal{D}$-norm on $\mathcal{F}_{\mathcal{D}}$.

	\end{thm}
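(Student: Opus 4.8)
The plan is to prove the equivalence by showing that the continuity of either functional forces a uniform estimate of the form $|\Tr(F)| \le C\|F\|_{\mathcal D}$ on $\mathcal F_{\mathcal D}$, and conversely that such an estimate is enough to extend $\Det(I+\cdot)$ continuously. The bridge between the two functionals is the classical relation between a determinant and the trace of the associated logarithm, together with the finite-rank formulas recalled above. Concretely, for $F \in \mathcal F_{\mathcal D}$ with $\|F\|_{\mathcal D}$ small we have the convergent expansion
\beq
\log \Det(I+F) = \Tr\bigl(\log(I+F)\bigr) = \sum_{k=1}^{\infty} \frac{(-1)^{k+1}}{k}\Tr(F^k),
\eq
which is legitimate on $\mathcal F_{\mathcal D}$ because everything reduces to the finite matrix $F_1$ on $M_F$, and the series converges in $\mathcal D$-norm since $\|F^k\|_{\mathcal D} \le \|F\|_{\mathcal D}^k$ by property (ii) of an embedded algebra.

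For the implication (ii) $\Rightarrow$ (i), I would argue as follows. Assume $|\Tr(F)| \le C\|F\|_{\mathcal D}$ on $\mathcal F_{\mathcal D}$. Then for any $F \in \mathcal F_{\mathcal D}$ and any $k$, applying the bound to $F^k$ gives $|\Tr(F^k)| \le C\|F\|_{\mathcal D}^k$. Hence, for $\|F\|_{\mathcal D} < 1$, the series above converges absolutely and defines a function $F \mapsto \log\Det(I+F)$ that is continuous in the $\mathcal D$-norm on the ball $\{\|F\|_{\mathcal D} < 1\} \cap \mathcal F_{\mathcal D}$; exponentiating, $\Det(I+\cdot)$ is continuous there. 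To get continuity on all of $\mathcal F_{\mathcal D}$, I would use the multiplicative property $\Det((I+A)(I+B)) = \Det(I+A)\Det(I+B)$ to factor a general finite-rank $F$ through a chain of small perturbations: writing $I+F$ as a product of finitely many factors $I+G_j$ with $\|G_j\|_{\mathcal D}$ small is not literally possible for arbitrary $F$, so instead I would prove continuity at a general point $F_0$ by writing $I+F = (I+F_0)(I+(I+F_0)^{-1}(F-F_0))$ when $I+F_0$ is invertible, reducing to continuity near the identity for the second factor; the non-invertible case (where $\Det(I+F_0)=0$) is handled by a separate perturbation argument showing $\Det(I+F)\to 0$. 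Then the continuous extension to $\mathcal D$ exists by density of $\mathcal F_{\mathcal D}$ and uniform continuity on bounded sets.

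For the reverse implication (i) $\Rightarrow$ (ii), suppose $\Det(I+\cdot)$ extends continuously to $\mathcal D$. Differentiating the determinant at the identity recovers the trace: for $F \in \mathcal F_{\mathcal D}$ and $t \to 0$,
\beq
\Det(I+tF) = 1 + t\,\Tr(F) + O(t^2),
\eq
so $\Tr(F) = \lim_{t\to 0} t^{-1}\bigl(\Det(I+tF)-1\bigr)$. Continuity of $\Det(I+\cdot)$ in the $\mathcal D$-norm, applied along the segment $\{tF : 0 \le t \le 1\}$, yields a bound $|\Det(I+tF)-1| \le \omega(t\|F\|_{\mathcal D})$ for a modulus of continuity $\omega$ near $0$; combined with the Taylor expansion this gives linear control $|\Tr(F)| \le C\|F\|_{\mathcal D}$ for $\|F\|_{\mathcal D}$ bounded, hence — by homogeneity of the trace and of the norm — on all of $\mathcal F_{\mathcal D}$. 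One must be slightly careful that the extended determinant still obeys the local expansion; this follows because on $\mathcal F_{\mathcal D}$ the expansion is exact-polynomial in $t$ and the coefficients are continuous, so it persists in the limit.

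The main obstacle I anticipate is the globalization step in (ii) $\Rightarrow$ (i): the logarithmic series only converges in a $\mathcal D$-ball of radius one, so extending continuity of $\Det(I+\cdot)$ from a neighborhood of $0$ to the whole algebra requires care, particularly near finite-rank operators $F_0$ with $\Det(I+F_0)=0$ where one cannot factor out an invertible $(I+F_0)$. I would resolve this by first establishing that $\Det(I+\cdot)$ is locally bounded and locally uniformly continuous on $\mathcal F_{\mathcal D}$ using the multiplicativity and the trace bound on products, and only then invoke density; the reference \cite{goh:trace} of Gohberg–Goldberg–Krupnik carries out precisely this kind of argument and can be cited for the technical continuation. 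The remaining steps — the finite-rank reductions, the series manipulations, and the first-order expansion of the determinant — are routine given the finite-matrix facts collected before the theorem.
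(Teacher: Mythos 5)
First, a point of reference: the paper does not actually prove Theorem \ref{th2eqr} --- it is quoted from Gohberg--Goldberg--Krupnik \cite{goh:trace} --- so your proposal has to stand on its own. Your direction (i) $\Rightarrow$ (ii) is essentially sound, but the step ``divide $|\Det(I+tF)-1|\le\omega(t\|F\|_{\mathcal D})$ by $t$'' does not by itself give a uniform constant, since $\omega(s)/s$ may blow up as $s\to 0$. The correct implementation uses exactly the polynomial structure you mention at the end: continuity at $0$ gives a $\delta>0$ with $|\Det(I+G)-1|\le 1$ whenever $\|G\|_{\mathcal D}\le\delta$; for $\|F\|_{\mathcal D}\le\delta$ the function $t\mapsto\Det(I+tF)$ is a polynomial whose deviation from $1$ is bounded by $1$ on the disc $|t|\le 1$, and the Cauchy formula for its first Taylor coefficient gives $|\Tr (F)|\le 1$, whence $|\Tr(F)|\le\delta^{-1}\|F\|_{\mathcal D}$ by homogeneity.

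The genuine gap is in (ii) $\Rightarrow$ (i), precisely at the globalization step you flagged. The factorization $I+F=(I+F_0)\bigl(I+(I+F_0)^{-1}(F-F_0)\bigr)$ does not work in this generality: $\mathcal{D}$ is only assumed to be a subalgebra of $\mathcal{L}(\bba)$, not an ideal, so $(I+F_0)^{-1}(F-F_0)$ need not belong to $\mathcal{F}_{\mathcal{D}}$, and even when it does there is no estimate $\|(I+F_0)^{-1}(F-F_0)\|_{\mathcal D}\lesssim\|F-F_0\|_{\mathcal D}$, because $(I+F_0)^{-1}$ has no reason to act boundedly in the $\mathcal{D}$-norm. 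Moreover, continuity of $\Det(I+\cdot)$ on $\mathcal{F}_{\mathcal D}$ is not enough to extend by density: one needs uniform continuity on $\mathcal{D}$-bounded subsets of $\mathcal{F}_{\mathcal D}$, which your argument never produces. The standard route avoids the logarithm entirely: expand $\Det(I+F)=\sum_{n\ge0}D_n(F)/n!$ with the Plemelj--Smithies coefficients $D_n(F)$, which are explicit polynomials in $\Tr(F),\dots,\Tr(F^n)$; the hypothesis $|\Tr(H)|\le c\|H\|_{\mathcal D}$ together with submultiplicativity and Hadamard's inequality gives $|D_n(F)|\le n^{n/2}(c\|F\|_{\mathcal D})^n$, and the multilinear structure of the $D_n$ yields an estimate of the form $|\Det(I+F)-\Det(I+G)|\le\|F-G\|_{\mathcal D}\exp\bigl(\gamma(1+\|F\|_{\mathcal D}+\|G\|_{\mathcal D})^2\bigr)$ for all $F,G\in\mathcal{F}_{\mathcal D}$, with no smallness or invertibility assumption. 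This Lipschitz estimate on bounded sets is what makes the density extension legitimate. As written, your proof of (ii) $\Rightarrow$ (i) is incomplete, and the deferral to \cite{goh:trace} is doing the real work.
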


The conditions above are satisfied in a good number of important examples. First we point out  how the conditions above are used to extend the trace and determinants from finite rank operators.\\

Let $\mathcal{D}\subset\mathcal{L}(\bba)$ be an approximable embedded subalgebra and assume  $\mathcal{D}$
 satisfies one of the equivalent conditions of Theorem \ref{th2eqr}. If $A\in \mathcal{D}$ we define the {\em trace} of $A$ and the {\em determinant } $I+A$ in the algebra $ \mathcal{D}$ by the equalities:
 \beq \Tr_{\mathcal{D}}(A):=\lim\limits_{n\rightarrow\infty}\Tr(F_n) \mbox{ and } \Det\limits_{\mathcal{D}}(I+A):=\lim\limits_{n\rightarrow\infty}\Det(I+F_n) ,  \eq
 where $\|A-F_n\|_{\mathcal{D}}\rightarrow 0$ with $F_n\in \mathcal{F}_{\mathcal{D}}$. As a consequence of the continuity of the extended trace and determinant, it can be shown that these definitions are independent of the choice of an approximative sequence $F_n$.\\
 
We can now introduce the embedded algebras that we are going to consider in this work. We first  recall some generalities on nuclear operators. 

\begin{defn} We will say that a Banach space $\bba$ satisfies the  {\em approximation property} if for every compact set $K$ in $\bba$ and for every $\epsilon >0$, there exists $F\in \mathcal{F}(\bba) $ such that
	\[\|x-Fx\|<\epsilon,\,\, \mbox{for every }x\in K.\]
\end{defn}

It is well known that the classical spaces $C(X)$ where $X$ is a compact topological space, $L^p(\mu)$ for $1\leq p\leq \infty$ and any measure satisfy the approximation property (cf. \cite{piet:book1}). P. Enflo showed a counterexample to the approximation property in Banach spaces (cf. \cite{ap:enflo}). A more natural counterexample has been found by A. Szankowsky who has proved that $\mathcal{L}(H)$ does not have the approximation property (cf. \cite{ap:sz}). \\

We can now define the concepts of nuclear operators on Banach spaces and the trace. Let $\mathcal{B}$ be a Banach space, a linear operator $T$ from $\bba$ into $\bba$ is called {\em nuclear} if there exist sequences
$(x_{n}^{'})\mbox{ in } \bba '$ and $(y_n) \mbox{ in } \bba$ in such a way  that $T$ admits a nuclear decomposition, which means that,
\begin{equation}\label{nucleardecompo}
   Tx= \sum\limits_n \left <x,x_{n}'\right>y_n \,\mbox{ and }\,
\sum\limits_n \|x_{n}'\|_{\bba '}\|y_n\|_{\bba} < \infty. 
\end{equation}
This definition agrees with the concept of trace class operator in
the setting of Hilbert spaces. The set of nuclear operators from $\bba$ into $\bba$ forms the ideal of nuclear operators $\mathcal{N}(\bba)$ endowed with the norm
\beq\nonumber N(T)=\inf\{\sum\limits_n \|x_{n}'\|_{\bba '}\|y_n\|_{\bba} : T=\sum\limits_n x_{n}'\otimes y_n \}.\eq
 Since $\mathcal{N}(\bba)$  is an ideal  in $\mathcal{L}(\bba),$ it is also a subalgebra. 
It is natural to attempt to define the trace of $T\in\mathcal{N}(\bba)$ by
$$\Tr (T)=\sum\limits_{n=1}^{\infty}x_{n}'(y_n),$$
where $T=\sum\limits_{n=1}^{\infty}x_{n}'\otimes y_n$ is a
representation of $T$. Grothendieck (cf. \cite{gro:me}) discovered that $\Tr(T)$ is well defined for all $T\in\mathcal{N}(\bba)$ if and only if $\bba$ has the {\em aproximation property} (cf. \cite{piet:book}, \cite{df:tensor}). 

In particular, if $\bba$ has the aproximation property, then the subalgebra $\mathcal{N}(\bba)$ of nuclear operators is an approximable embedded subalgebra in $\mathcal{L}(\bba)$. 
We also recall that if $\bba=\Hcal$ is a complex separable  Hilbert space, then the ideal of nuclear operators $\mathcal{N}(\bba)$ coincides with the trace class.\\

In the setting of $L^p(\Omega, \mu)$ spaces, the nuclear operators are characterized by kernels of the form 
\beq k(x,y)=\sum \limits_{n=1}^{\infty}g_{n}(x)\otimes h_n(y),\label{char57}\eq
where $g_{n}, h_n$ are sequences in $L^p, L^q$ respectively and such that $\sum\limits_n \|g_{n}\|_{L^p}\|h_n\|_{L^q} < \infty$. For the above characterization the reader can consult \cite{del:tracetop}, \cite{del:trace}. Given  such kernel, the trace can be expressed in the form 
\beq\label{tr1djd}\Tr(T)=\int\limits_{\Omega}\left(\sum \limits_{n=1}^{\infty}g_{n}(x)h_n(x)\right) d\mu (x),\eq
which holds almost everywhere on the domain. In other words, the trace can be obtained by integration on the diagonal. But a trace formula for an arbitrary kernel $\alpha(x,y)$ of $T$ is not directly deducible from the corresponding one for $k$ using \eqref{tr1djd}. The problem of traceability and of finding trace formulae has been extensively investigated 
 (cf. \cite{fm:nucl},  \cite{mo:eigen}, \cite{dr13a:nuclp}, \cite{dr13:schatten}, \cite{dr:suffkernel}). In particular, if one disposes of a continuous kernel $\alpha(x,y)$ of a nuclear operator over a suitable topological space one has 
\[\Tr(T)=\int\limits_{\Omega}\alpha(x,x) d\mu (x).\]

On the other hand as it is well known that to characterize the nuclearity in terms of a given integral kernel is in general not an easy task. Indeed, there are some classical counterexamples that we will recall later on. However this obstruction vanishes in the case for instance of the counting measure on the lattice $\zet^n$ due to the uniqueness of the kernel. Moreover, one has the following mild sufficient condition  for the nuclearity of discrete operators  in terms of their kernels (that later we will use  in Theorem \ref{detzn}). Below we consider $e_j(i)=\delta_{ji}$ the Kronecker's delta. 
\begin{prop}\label{discret} Let $1\leq p<\infty$ and let $K:\zet^n\times\zet^n\rightarrow\ce $ be a function satisfying
		\beq  \sum\limits_{j\in\zet^n}\left(\sum\limits_{m\in\zet^n}|K(j,m)|^{p}\right)^{\frac{1}{p}}<\infty .\label{sc}
		\eq
	Then the relation $K(i,j)=\langle Te_j,e_i\rangle, (i,j)\in\zet^n\times\zet^n,$ defines a nuclear operator $T:\ell^{p}({\zet^n})\rightarrow \ell^{p}({\zet^n}) $. Moreover its trace is given by
	\[\Tr(T)=\sum\limits_{n\in\zet^n} K(n,n). \]
\end{prop}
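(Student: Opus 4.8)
The plan is to construct an explicit nuclear decomposition of $T$ in the sense of \eqref{nucleardecompo} and then read the trace off from it. For $j\in\zet^n$ introduce the ``$j$-th row'' $r_j:=\big(K(j,m)\big)_{m\in\zet^n}$; the hypothesis \eqref{sc} says precisely that each $r_j$ belongs to $\ell^p(\zet^n)$ and that $\sum_{j\in\zet^n}\|r_j\|_{\ell^p}<\infty$. Using the embedding $\ell^p(\zet^n)\hookrightarrow\ell^q(\zet^n)$, $\|v\|_{\ell^q}\le\|v\|_{\ell^p}$, with $\frac1p+\frac1q=1$, each $r_j$ is then also an element of the dual $\ell^q(\zet^n)=\big(\ell^p(\zet^n)\big)'$, and $\sum_{j}\|r_j\|_{\ell^q}<\infty$.

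First I would \emph{define} $T$ on $\ell^p(\zet^n)$ by $Tx:=\sum_{j\in\zet^n}\langle x,r_j\rangle\,e_j$, the bracket denoting the $\ell^p$--$\ell^q$ duality pairing. Since $\big\|\langle x,r_j\rangle e_j\big\|_{\ell^p}=|\langle x,r_j\rangle|\le\|x\|_{\ell^p}\,\|r_j\|_{\ell^q}$ and $\sum_j\|r_j\|_{\ell^q}<\infty$, this series converges absolutely in $\ell^p(\zet^n)$, so $T\in\mathcal L(\ell^p(\zet^n))$ with $\|T\|\le\sum_j\|r_j\|_{\ell^q}$. A one-line computation gives $Te_k=\sum_{j}r_j(k)\,e_j=\sum_{j}K(j,k)\,e_j$, hence $\langle Te_k,e_i\rangle=K(i,k)$, so $T$ is the operator attached to $K$ (and the unique bounded one with this matrix, since the $e_k$ span a dense subspace of $\ell^p(\zet^n)$). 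Moreover the defining series is itself a nuclear decomposition $T=\sum_{j}r_j\otimes e_j$ with $r_j\in(\ell^p)'$, $e_j\in\ell^p$ and $\sum_j\|r_j\|_{(\ell^p)'}\,\|e_j\|_{\ell^p}=\sum_j\|r_j\|_{\ell^q}<\infty$; therefore $T\in\mathcal N(\ell^p(\zet^n))$, with $N(T)\le\sum_j\|r_j\|_{\ell^p}$.

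For the trace, I would invoke that $\ell^p(\zet^n)$, being an $L^p$-space, has the approximation property, so by Grothendieck's theorem (recalled in Section \ref{sec2}) the trace of a nuclear operator is well defined and may be evaluated on any nuclear decomposition; applied to the one above this gives $\Tr(T)=\sum_j r_j(e_j)=\sum_j r_j(j)=\sum_{j\in\zet^n}K(j,j)$, the series converging absolutely because $|K(j,j)|\le\|r_j\|_{\ell^p}$. I expect the only step needing genuine care to be the bookkeeping that makes \eqref{sc} line up with the nuclear-norm sum $\sum_j\|r_j\|_{(\ell^p)'}$, i.e. the control of the rows in the dual norm by their $\ell^p$ norm; once the correct decomposition $T=\sum_j r_j\otimes e_j$ is in hand, the absolute convergence of the defining series, the identity $\langle Te_k,e_i\rangle=K(i,k)$, and the trace evaluation are all routine.
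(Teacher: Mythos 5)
Your decomposition is the transpose of the paper's, and the step that breaks is the claimed embedding $\ell^p(\zet^n)\hookrightarrow\ell^q(\zet^n)$ with $\|v\|_{\ell^q}\le\|v\|_{\ell^p}$ for the conjugate exponent $q=p/(p-1)$. For sequence spaces the inclusion $\ell^{p_1}\subset\ell^{p_2}$ requires $p_1\le p_2$, and $p\le q$ holds precisely when $1\le p\le 2$; for $2<p<\infty$ one has $q<2<p$, the inclusion reverses, and a row $r_j\in\ell^p$ need not belong to $(\ell^p)'=\ell^q$ at all. This is not a bookkeeping issue that can be patched within your scheme: take $n=1$, $p=3$, and $K$ supported on the single row $j=0$ with $K(0,m)=(1+|m|)^{-1/2}$. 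Then $\sum_{j}\bigl(\sum_{m}|K(j,m)|^{3}\bigr)^{1/3}<\infty$, yet $K(0,\cdot)\notin\ell^{3/2}$, so the functional $x\mapsto\sum_m K(0,m)x(m)$ is unbounded on $\ell^3$ and your series $Tx=\sum_j\langle x,r_j\rangle e_j$ does not even define a bounded operator, let alone a nuclear one. (The same example shows that the row-indexed reading of \eqref{sc} cannot yield the conclusion for $p>2$; the hypothesis must be read as summability of the $\ell^p$-norms of the \emph{columns} $Te_j=K(\cdot,j)$, which is what the paper's computation $\sum_j\|Te_j\|_{\ell^p}=\sum_j\bigl(\sum_m|K(m,j)|^p\bigr)^{1/p}$ actually uses, the printed indices in \eqref{sc} notwithstanding.)

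The paper's proof avoids the dual space altogether: it expands $f=\sum_j\langle f,e_j\rangle e_j$ and writes $Tf=\sum_j\langle f,e_j\rangle\,Te_j$, i.e.\ the nuclear decomposition $T=\sum_j e_j'\otimes Te_j$ with the norm-one coordinate functionals $e_j'\in(\ell^p(\zet^n))'$ and the columns $Te_j\in\ell^p(\zet^n)$; the nuclear sum is then $\sum_j\|e_j'\|_{(\ell^p)'}\|Te_j\|_{\ell^p}=\sum_j\|Te_j\|_{\ell^p}$, which is exactly the hypothesis and works for every $1\le p<\infty$. Your verification that the proposed matrix reproduces $K$, and your trace evaluation via the approximation property of $\ell^p$ together with $|K(j,j)|\le\|r_j\|_{\ell^p}$, are fine once a genuine nuclear decomposition is in hand; so for $1\le p\le 2$ your argument is a correct transposed variant, but for $p>2$ you must switch to the column decomposition.
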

\begin{proof} We consider $q$ such that $\sdual.$ We are going to prove that \eqref{sc} makes  the decomposition \eqref{nuclearrepresentation} a nuclear decomposition of $T$ (in the sense of \eqref{nucleardecompo}). Define the functional $e_{n}'\in (\ell^{q}(\mathbb{Z}^n))'$ by $e_{n}'(f):=\langle f,e_n\rangle.$ Observe that $\Vert e_n'\Vert_{(\ell^{q}(\mathbb{Z}^n))'}=\Vert e_n \Vert_{\ell^{p}(\mathbb{Z}^n)}=1,$ and that
	\begin{equation}\label{nuclear}
	\sum\limits_{j\in\zet^n}\| Te_j\|_{\ell^{p}}
	\|e_j\|_{\ell^p}=\sum\limits_{j\in\zet^n}\| Te_j\|_{\ell^p}=\sum\limits_{j\in\zet^n}\left(\sum\limits_{m\in\zet^n}|K(m,j)|^{p}\right)^{\frac{1}{p}}<\infty.
	\end{equation}This justifies the following calculation for $f\in \ell^{p}({\zet^n})$:
	\begin{align*} 
	f&=\sum\limits_{j\in\zet^n}\langle f,e_j\rangle e_j,
	\end{align*}
	\begin{equation}\label{nuclearrepresentation}
	    Tf=\sum\limits_{j\in\zet^n}\langle f,e_j\rangle Te_j,
	\end{equation}
where the right hand side of \eqref{nuclearrepresentation} converges in the $\ell^p$-norm. Indeed, $T$ is bounded  on $\ell^{p}(\mathbb{Z}^n)$ as it proves  the following estimate 
\begin{equation}
    \Vert Tf\Vert_{\ell^p}\leq \left( \sum\limits_{j\in\mathbb{Z}^n}\| Te_j\|_{\ell^{p}}\right) \Vert f\Vert_{\ell^p}.
\end{equation}
Now, in view of \eqref{nuclear}, $T$ is a nuclear operator from $\ell^{p}({\zet^n})$ to
	$\ell^{p}({\zet^n})$, with kernel $K(i,j)=\langle Te_j,e_i\rangle$. Indeed, the kernel of $T$ is obtained by writing 
\begin{eqnarray*}
	Tf(i)&=&\sum\limits_{j\in\zet^n}\langle f,e_j\rangle \langle Te_j,e_i\rangle \\
	&=&\sum\limits_{j\in\zet^n}f(j)\langle Te_j,e_i\rangle ,
	\end{eqnarray*}
where we have used 	the identity $(Te_j)(i)=\langle Te_j,e_i\rangle.$
 	Therefore the kernel of $T$ is given by $K(i,j)=\langle Te_j,e_i\rangle$. Thus, we end the proof.
\end{proof}
The inequality \eqref{sc} in the above proposition is not a necessary condition for nuclearity. We distinguish two cases, $1<p<\infty $ or $p=1$. In the first case, we define a rank one operator with kernel $K(j,m)=g(j)h(m)$, where $g\in \ell^{p}({\zet})\setminus  \ell^{1}({\zet})$ and $h\in \ell^{q}({\zet})\setminus \{0\}$. Then the operator $T$ defined by $K$ is nuclear. However, 
\begin{align*}\sum\limits_{j\in\zet}(\sum\limits_{m\in\zet}|K(j,m)|^{p})^{\frac{1}{p}}&=\sum\limits_{j\in\zet}|g(j)|(\sum\limits_{m\in\zet}|h(m)|^{p})^{\frac{1}{p}}\\
&=\|g\|_{\ell^1}\|h\|_{\ell^{p}}\\
&=\infty.
\end{align*}
The other case is similar, if $p=1$ we choose $g\in \ell^{p}({\zet})\setminus \{0\}$ and $h\in \ell^{\infty}({\zet})\setminus  \ell^{p}({\zet})$, having in account that $p<\infty$, and $q=\infty$.\\

For Hilbert spaces we now recall the definition of the trace class. Let $\Hcal$ be a complex separable Hilbert space and let $T:\Hcal\rightarrow \Hcal$ be a linear compact operator.  We denote by  $s_k(T)$ the singular values of $T$, i.e, the eigenvalues of $|T|=\sqrt{T^*T}$. If 
$$
\sum_{k=1}^{\infty} s_k(T)<\infty ,
$$
then the linear operator $T:\Hcal\rightarrow \Hcal$ is said to be in the {\em trace class} $S_1(H)$. It can be shown that  $S_1(\Hcal)$ is a Banach space in which the norm $\|\cdot\|_{S_1}$ is given by 
$$
\|T\|_{S_1}= \sum_{k=1}^{\infty} s_k(T),\,T\in S_1,
$$
multiplicities counted.
Let $T:\Hcal\rightarrow \Hcal$ be an operator in $S_1(\Hcal)$ and let  $(\phi_k)_k$ be any orthonormal basis for $\Hcal$. Then, the series $\sum\limits_{k=1}^{\infty} \p{T\phi_k,\phi_k}$   is absolutely convergent and the sum is independent of the choice of the orthonormal basis $(\phi_k)_k$. Thus, we can define the trace $\Tr(T)$ of any linear operator
$T:\Hcal\rightarrow \Hcal$ in $S_1$ by 
$$
\Tr(T):=\sum_{k=1}^{\infty}\p{T\phi_k,\phi_k},
$$
where $\{\phi_k: k=1,2,\dots\}$ is any orthonormal basis for $\Hcal$. 

\section{Determinant formulae}\label{sec3}
In this section we present the main results of this work. 
We start by recalling a Plemelj-Smithies formula. For a compact manifold without boundary $M,$ $\Psi^{m}(M,\textnormal{loc})$ denotes the standard H\"ormander class of pseudo-differential operators of order $m\in \mathbb{R},$ (cf. \cite{Hormander1985III}).

\begin{thm}\label{thm1af}  Let $\mathcal{D}\subset\mathcal{L}(\bba)$ be an approximable embedded subalgebra. Suppose that the function $\Det(I+F)$ has a continuous extension from from $\mathcal{F}_{\mathcal{D}}$ to $\mathcal{D}$. Then for any $A\in\mathcal{D}$ fixed, the function $\Det\limits_{\mathcal{D}}(I+\lambda A)$ is an entire function and 
\[\Det_{\mathcal{D}}(I+\lambda A)=\exp\left(\sum\limits_{j=1}^{\infty}\frac{(-1)^{j+1}}{j}\Tr_{}(A^j)\lambda ^j\right),  \]
for $\lambda$ in a sufficiently small disc centered at $0$ in $\ce$. 
\end{thm}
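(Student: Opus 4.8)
The plan is to reduce the statement to the classical Plemelj--Smithies expansion for finite-rank operators and then pass to the limit using the continuity hypotheses. First I would fix $A \in \mathcal{D}$ and, using that $\mathcal{D}$ is approximable, choose a sequence $F_n \in \mathcal{F}_{\mathcal{D}}$ with $\|A - F_n\|_{\mathcal{D}} \to 0$. For each finite-rank operator $F_n$, the function $\lambda \mapsto \Det(I + \lambda F_n)$ is a polynomial in $\lambda$ (it equals $\prod_{j=1}^{m}(1 + \lambda \lambda_j(F_n))$ with $m = \dim M_{F_n}$), hence entire, and the classical finite-dimensional identity gives
\[
\Det(I + \lambda F_n) = \exp\left(\sum_{j=1}^{\infty}\frac{(-1)^{j+1}}{j}\Tr(F_n^j)\,\lambda^j\right)
\]
for $|\lambda|$ small (small enough that $|\lambda| \, \|F_n\|_{\mathcal{L}(\bba)} < 1$, so that $\log(I + \lambda F_n)$ has a convergent power series; this is just $\log\det = \operatorname{tr}\log$ in finite dimensions). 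Equivalently one can write $\Det(I+\lambda F_n) = \sum_{k=0}^{\infty} \alpha_k(F_n)\lambda^k$ where the Plemelj--Smithies coefficients $\alpha_k$ are universal polynomials in the traces $\Tr(F_n), \dots, \Tr(F_n^k)$.

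Next I would establish the two convergences needed to pass $n \to \infty$. On the one hand, by the assumed continuous extension of the determinant in the $\mathcal{D}$-norm, $\Det_{\mathcal{D}}(I + \lambda A) = \lim_n \Det(I + \lambda F_n)$ for every fixed $\lambda$; moreover since $\|\lambda A - \lambda F_n\|_{\mathcal{D}} \to 0$ uniformly for $\lambda$ in any bounded set, and since $\Det_{\mathcal{D}}$ is continuous, one gets local uniform convergence of the entire (in fact polynomial) functions $\Det(I + \lambda F_n)$, whence the limit $\Det_{\mathcal{D}}(I + \lambda A)$ is entire in $\lambda$ by the Weierstrass convergence theorem. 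On the other hand, by Theorem \ref{th2eqr} the hypothesis that the determinant extends continuously is equivalent to $\Tr$ being bounded in the $\mathcal{D}$-norm on $\mathcal{F}_{\mathcal{D}}$, so $\Tr$ extends to a bounded functional on $\mathcal{D}$; combined with condition (ii) of the embedded-algebra definition, which gives $\|A^j - F_n^j\|_{\mathcal{D}} \to 0$ for each fixed $j$ (telescoping $A^j - F_n^j$ and using submultiplicativity plus boundedness of $\|F_n\|_{\mathcal{D}}$), we obtain $\Tr(F_n^j) \to \Tr(A^j)$ for every $j$.

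I would then combine these: for $\lambda$ in a small disc where $|\lambda|\,\|A\|_{\mathcal{D}}<1$ (hence eventually $|\lambda|\,\|F_n\|_{\mathcal{D}}<1$), the series $\sum_{j\ge 1}\frac{(-1)^{j+1}}{j}\Tr(F_n^j)\lambda^j$ converges absolutely and uniformly in $n$, because $|\Tr(F_n^j)| \le C\|F_n^j\|_{\mathcal{D}} \le C\|F_n\|_{\mathcal{D}}^j$ is dominated by a convergent geometric series independent of $n$; term-by-term passage to the limit using $\Tr(F_n^j)\to\Tr(A^j)$ gives
\[
\lim_{n\to\infty}\exp\left(\sum_{j=1}^{\infty}\frac{(-1)^{j+1}}{j}\Tr(F_n^j)\lambda^j\right) = \exp\left(\sum_{j=1}^{\infty}\frac{(-1)^{j+1}}{j}\Tr(A^j)\lambda^j\right).
\]
Since the left-hand sides also converge to $\Det_{\mathcal{D}}(I+\lambda A)$, the identity follows on that disc; the entirety of $\Det_{\mathcal{D}}(I+\lambda A)$ was already shown. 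The main obstacle I anticipate is the uniform-in-$n$ control needed to justify interchanging the limit in $n$ with the infinite sum in $j$ — this is where one must be careful to use the submultiplicative $\mathcal{D}$-norm (not just the operator norm) to get the geometric domination, and to use Theorem \ref{th2eqr} to know $\Tr$ is $\mathcal{D}$-bounded; everything else is either the classical finite-rank formula or a routine Weierstrass/normal-families argument.
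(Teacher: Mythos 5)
Your proof is correct. Note that the paper does not actually prove Theorem \ref{thm1af}: it is stated as a recalled result from the Gohberg--Goldberg--Krupnik framework of embedded algebras (\cite{goh:trace}), so there is no in-paper argument to compare against; your reduction to the finite-rank identity $\Det(I+\lambda F)=\prod_j(1+\lambda\lambda_j(F))=\exp\bigl(\sum_k \tfrac{(-1)^{k+1}}{k}\Tr(F^k)\lambda^k\bigr)$ followed by a limiting argument is exactly the standard proof given there. The only step you gloss over is the local uniform convergence of $\Det(I+\lambda F_n)$ to $\Det_{\mathcal{D}}(I+\lambda A)$: pointwise continuity of the extended determinant plus uniform convergence of the arguments does not by itself give uniform convergence of the values, but it does once you observe that $\{\lambda A:|\lambda|\le R\}$ is compact in $\mathcal{D}$ and invoke uniform continuity of the extension near that compact set (or, alternatively, a quantitative Lipschitz-type bound for the determinant on $\mathcal{D}$-balls); with that remark inserted, the Weierstrass argument for entirety and the dominated term-by-term passage $\Tr(F_n^j)\to\Tr(A^j)$ using $|\Tr(F_n^j)|\le C\|F_n\|_{\mathcal{D}}^j$ are all sound.
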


We now consider different embedded subalgebras and the corresponding results.

\subsection{Poincar\'e determinant  on the torus}
We will present an analysis for the determinant of the toroidal quantisation in the Kohn-Nirenberg toroidal classes $S^m(\tn\times \zn)$ (cf. \cite{rt:book}).
We first note that if $A\in \ell^{1}(\zn\times\zn)$, we can write $A=(a_{jk})_{j,k\in \mathbb{Z}^n}$ with
\beq\sum\limits_{j,k\in \mathbb{Z}^n}|a_{jk}|<\infty.\label{inpoi}\eq 
The matrix $A$ can be viewed as the matrix with respect to the standard basis of a bounded operator (which we again denote by $A$) on $\bba=\ell^{p}(\zn)$ for all $1\leq p<\infty$. Indeed, in view of the discrete Schur test,  the operator
\begin{equation*}
    Af(k):=\sum_{j\in \mathbb{Z}^n} a_{jk}f(j),\,\,f\in c_{0}(\mathbb{Z}^n):=\{f:\mathbb{Z}^n\rightarrow\mathbb{C}:\,f\textnormal{  has compact support}\}
\end{equation*}extends to a bounded linear operator on $\ell^p(\mathbb{Z}^n),$ for all $1\leq p\leq \infty,$ and $$\Vert {A}\Vert_{\ell^p\rightarrow \ell^p}\leq \left(\sup_{k\in \mathbb{Z}^n}\Vert a_{jk}\Vert_{\ell^1(\mathbb{Z}^n)}\right)^{\frac{1}{p}}\left(\sup_{j\in \mathbb{Z}^n}\Vert a_{jk}\Vert_{\ell^1(\mathbb{Z}^n)}\right)^{1-\frac{1}{p}}. $$ 
The Poincar\'e algebra  $\Dcal_p$ consists of those bounded linear  operators $A$ on $\ell^p(\mathbb{Z}^n)$ with  matrices (with respect to the standard basis) in $\ell^{1}(\zn\times\zn)$. We endow $\Dcal_p$ with the norm
\[\|A\|_{\Dp}:=\sum\limits_{j,k\in \zn}|a_{jk}|.\,\,\,\,\,\,\,\,\,\,\,\,\,\,\,\,\,\,\,\,\,\,\,\,\,\,\,\,\,\,\,\,\]
The value $p$ indicates that the operator is considered acting on the space $\bba=\ell^{p}(\zn)$ despite the norm is independent of $p$.\\

For a fixed $p$ such that $1\leq p<\infty$, one can prove that $\Dp$ is indeed an approximable embedded subalgebra of $\mathcal{L}(\bba)$ with 
 $\bba=\ell^{p}(\zn)$.\\

We observe that
\beq |\Tr(A)|=\left|\sum\limits_{j\in \zn}a_{jj}\right|\leq \|A\|_{\Dp}.\,\,\,\,\,\,\,\,\,\,\,\,\eq
Thus, $\Dp$ satisfies the assumption (ii) of Theorem \ref{th2eqr}, and therefore the determinant $\Det(I+F)$ can be continuously extended to $\Dp$. We call this determinant the {\em Poincar\'e determinant.}\\
\begin{rem}
The condition $A\in \ell^1(\mathbb{Z}^n\times \mathbb{Z}^n),$ can be improved to the following one $A\in( \ell^\infty(\mathbb{Z}^n)\otimes \ell^1(\mathbb{Z}^n))\cap( \ell^\infty(\mathbb{Z}^n)\otimes \ell^1(\mathbb{Z}^n) ),$ in order to obtain a bounded operator on $\ell^p(\mathbb{Z}^n)$ (see Lemma 3.3.30 of \cite{rt:book}). However, we keep the  algebra condition on $A\in \ell^1(\mathbb{Z}^n\times \mathbb{Z}^n)$ because for $n=1,$ we recover the usual algebra $\mathcal{D}_p$ on $\ell^p(\mathbb{Z})$ introduced by Poincar\'e (cf. \cite{goh:trace}).
\end{rem}
We now consider operators on the torus $\tn$. In this special domain we can  take advantage of the Fourier transform and the duality with $\zn$ to relate the operators on the torus with the Poincar\'e determinant.\\

Let $\tn=\Rn/\zn$ be the $n$-dimensional torus. The collection $\{e^{2\pi ix\cdot k}\}_{k\in\zn}$ is an orthonormal basis of $L^2(\tn)$. We denote the Fourier transform on the torus $\tn$ by $\fou_{\tn}$ which is defined by
\[(\fou_{\tn}\varphi)(k)=\int\limits_{\tn}e^{-2\pi ix\cdot k}\varphi(x)dx,\,\,\varphi\in L^1(\tn).\]
The toroidal quantisation associated to a symbol $\sigma\in S^m(\tn\times\zn)$ is the densely defined  operator $T=\sigma(x,D)$ given by 
\[ \,\,\,\,  \,\,\,\,\,\,\,\,\,\,\,\,\,\,\,\,Tf(x)=\sum\limits_{k\in\zn}e^{2\pi ix\cdot k}\sigma(x,k)\widehat{f}(k) ,\,\,\, f\in C^\infty(\tn).\]
This quantisation has been extensively analysed in \cite{rt:book} for the general case of $\Tn$ and on compact Lie groups. For the toroidal H\"ormander class of order $m\in \mathbb{R},$ one has  $\Psi^m(\mathbb{T}^n,\textnormal{loc})=\{\sigma(x,D):\sigma\in S^m(\mathbb{T}^n\times \zet^n)\}$ (cf. \cite{rt:book}). 
 We observe that for $f\in C^{\infty}(\tn)$, and after a change of variable we have 
\begin{align*}
\,\,\,\,\,\,\,\,\,\,\,\,\,\,\,\,\,\,\,\,Tf(x)=&\sum\limits_{k\in\zn}e^{2\pi ix\cdot k}\sigma(x,k)\widehat{f}(k)\\
=&\sum\limits_{k\in\zn}\left(\sum\limits_{l\in\zn}\widehat{\sigma}(l,k)e^{2\pi ix\cdot l}\right)\widehat{f}(k)e^{2\pi ix\cdot k}\\
=&\sum\limits_{j\in\zn}\left(\sum\limits_{k\in\zn}\widehat{\sigma}(j-k,k)\widehat{f}(k)\right) e^{2\pi ix\cdot j}.
\end{align*}
From the last identity we can write $Tf$ as the Fourier series
\[Tf(x)=\sum\limits_{j\in\zn}C_{j} e^{2\pi ix\cdot j},\,x\in \tn,\,\,\,\,\,\,\,\,\,\,\,\,\,\,\,\,\,\,\,\,\,\,\,\,\,\,\,\]
where $C_j$ is the $j$-th Fourier coefficient of $Tf$.
We  now note that since  
\[C_j=\sum\limits_{k\in\zn}\widehat{\sigma}(j-k,k)\widehat{f}(k),\]
 we can write 
$
    C_j=\sum\limits_{k\in\zn}A_{jk}\widehat{f}(k) =\sum\limits_{k\in\zn}A_{jk}\phi(k),
$
where $A_{jk}$ is the matrix defined by 
\beq\label{amatf8} A_{jk}:=\widehat{\sigma}(j-k,k)\,\,\,\,\,\,\,\,\,\,\,\,\,\,\,\,\,\,\,\,\,\,\,\,\,\,\, \eq
and $\phi(k)=\widehat{f}(k).$\\

We now observe  that if the matrix  $A$ belongs to $\ell^1(\zn\times\zn)$, then $A$ defines an operator in $\Dcal_p$. In particular we consider $p=2$ to take advantage of the isometry of the Fourier transform. On the other hand the operator $T$ can be written in the form 
$T=\fou_{\tn}^{-1}A\fou_{\tn}.$
We observe that 
$\Gamma(A):=\fou_{\tn}^{-1}A\fou_{\tn}$
defines an algebra isomorphism 
\[\Gamma:\Dcal_2\longrightarrow\mathcal{L}(L^2(\To)).\]
To see the question on the approximabilitiy, we point out that the isomorphism $\Gamma$ preserves the class of finite rank operators $\mathcal{F}_{\Dcal_2}(\ell^2)=\mathcal{F}(\ell^2)$, that is   \[\Gamma(\mathcal{F}(\ell^2))=\mathcal{F}(L^2(\To)).\]
The above holds since given $H\in\mathcal{F}(L^2(\tn))$ we can write
$H=\fou_{\tn}^{-1}(\fou_{\tn}H\fou_{\tn}^{-1})\fou_{\tn}.$
Hence $\Gamma(\fou_{\tn}H\fou_{\tn}^{-1})=H.$ In $\Gamma(\Dcal_2)$ the norm considered is the carried one from $\Dcal_2$, i.e. 
\[\|T\|_{\Gamma}:=\|A\|_{\Dcal_2}.\]
Since the Poincar\'e determinant is defined on  $\Dcal_2$ as well as the trace, then the same can be done on the embedded subalgebra $\Gamma(\Dcal_2)$ of operators on $L^2(\tn)$. \\

Summarising the discussion above, we have:

\begin{thm}
	The Poincar\'e determinant on $\Dcal_2$ induces a determinant on $\Gamma(\Dcal_2)$ through the formula
	\beq\Det(I+T)=\Det(I+A).\eq
	\end{thm}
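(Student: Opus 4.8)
The plan is to identify $\Gamma(\Dcal_2)$ as an approximable embedded subalgebra of $\mathcal{L}(L^2(\tn))$ for which $\Det(I+\cdot)$ admits a continuous extension — so that Theorem~\ref{th2eqr} applies and $\Det_{\Gamma(\Dcal_2)}(I+\cdot)$ is defined — and then to check that this extension is simply the Poincar\'e determinant on $\Dcal_2$ transported along $\Gamma$.

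First I would record the structural features of $\Gamma(A)=\fou_{\tn}^{-1}A\fou_{\tn}$. Since $\fou_{\tn}\colon L^2(\tn)\to\ell^2(\zn)$ is unitary, $\Gamma$ is a similarity, hence an algebra isomorphism of $\Dcal_2$ onto $\Gamma(\Dcal_2)$ with $\|\Gamma(A)\|_{\mathcal{L}(L^2(\tn))}=\|A\|_{\mathcal{L}(\ell^2)}$, and by the definition $\|\Gamma(A)\|_{\Gamma}:=\|A\|_{\Dcal_2}$ it is isometric for the algebra norms. Conditions (i)--(ii) of an embedded subalgebra then transfer verbatim: $\|\Gamma(A)\|_{\mathcal{L}(L^2(\tn))}=\|A\|_{\mathcal{L}(\ell^2)}\le\|A\|_{\Dcal_2}=\|\Gamma(A)\|_{\Gamma}$, and submultiplicativity of $\|\cdot\|_{\Gamma}$ is inherited from $\Dcal_2$ via $\Gamma(A_1)\Gamma(A_2)=\Gamma(A_1A_2)$. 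Conjugation by a unitary preserves rank, so $\Gamma$ restricts to a bijection of $\mathcal{F}_{\Dcal_2}$ onto $\mathcal{F}_{\Gamma(\Dcal_2)}:=\mathcal{F}(L^2(\tn))\cap\Gamma(\Dcal_2)$; since $\Dcal_2$ is approximable (already noted above), applying $\Gamma$ to an approximating sequence shows $\mathcal{F}_{\Gamma(\Dcal_2)}$ is dense in $\Gamma(\Dcal_2)$ in the $\Gamma$-norm, so $\Gamma(\Dcal_2)$ is approximable.

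Next I would verify the trace bound required by Theorem~\ref{th2eqr}. For finite rank $F\in\mathcal{F}_{\Dcal_2}$ the operator $\Gamma(F)$ is similar to $F$, hence has the same nonzero eigenvalues with multiplicities; by the eigenvalue formulas for $\Tr$ and $\Det(I+\cdot)$ on finite rank operators from Section~\ref{sec2} this gives $\Tr(\Gamma(F))=\Tr(F)$ and $\Det(I+\Gamma(F))=\Det(I+F)$. Combining with the Poincar\'e bound $|\Tr(\cdot)|\le\|\cdot\|_{\Dcal_2}$ yields $|\Tr(\Gamma(F))|\le\|\Gamma(F)\|_{\Gamma}$, so condition (ii) of Theorem~\ref{th2eqr} holds for $\Gamma(\Dcal_2)$ and $\Det(I+\cdot)$ extends continuously from $\mathcal{F}_{\Gamma(\Dcal_2)}$ to $\Gamma(\Dcal_2)$; thus $\Det_{\Gamma(\Dcal_2)}(I+\cdot)$ is well defined.

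Finally, given $T=\Gamma(A)$ with $A\in\Dcal_2$, choose $F_n\in\mathcal{F}_{\Dcal_2}$ with $\|A-F_n\|_{\Dcal_2}\to0$; then $G_n:=\Gamma(F_n)\in\mathcal{F}_{\Gamma(\Dcal_2)}$ and $\|T-G_n\|_{\Gamma}=\|A-F_n\|_{\Dcal_2}\to0$, so by continuity of the two extensions together with $\Det(I+G_n)=\Det(I+F_n)$,
\[
\Det_{\Gamma(\Dcal_2)}(I+T)=\lim_{n\to\infty}\Det(I+G_n)=\lim_{n\to\infty}\Det(I+F_n)=\Det_{\Dcal_2}(I+A),
\]
which is the claimed identity $\Det(I+T)=\Det(I+A)$, independent of the choice of approximating sequence. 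The only genuinely non-formal ingredient is the similarity-invariance of $\Tr$ and $\Det(I+\cdot)$ on finite rank operators (itself immediate from the eigenvalue formulas); everything else is a transport of already-established facts through the isometric isomorphism $\Gamma$, so I expect no serious obstacle.
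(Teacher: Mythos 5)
Your proposal is correct and follows essentially the same route as the paper: the paper states this theorem as "summarising the discussion above," namely that $\Gamma$ is an algebra isomorphism given by unitary conjugation, that it preserves finite rank operators (hence approximability), that $\Gamma(\Dcal_2)$ carries the transported norm, and that the trace bound therefore transfers so the determinant extends. You have merely filled in the details of that sketch (the embedded-subalgebra conditions, similarity-invariance of $\Tr$ and $\Det(I+\cdot)$ on finite ranks, and the limiting argument), all of which are correct.
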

We will keep calling the induced determinant on  $\Gamma(\Dcal_2)$, the {\em Poincar\'e determinant } this time for operators on  $L^2(\tn)$, but to distinguish the domains we will denote it by $\Det_{\Gamma}$. We compute the Poincar\'e determinant for operators on the torus as follows.
\begin{thm} Let  $\sigma\in S^{\nu}(\tn\times\zn)$ with $\nu<-n.$ Then $T\in \Gamma(\mathcal{D}_2)$ and 
\begin{align}
   \textnormal{Det}(I+\lambda T)=\exp\left(\sum\limits_{m=1}^{\infty}\frac{(-1)^{m+1}}{m}\lambda ^m \sum_{  \begin{subarray}{l} (j_0,j_1,\cdots,j_{m}) \in\mathbb{Z}^{nm}\\ \,\,\,\,\,\,\,\,\,\,\,\,\,\,\,\,\,j_{0}=j_m  \end{subarray}}  \prod_{s=1}^{m}\widehat{\sigma}(j_{s-1}-j_{s},j_{s})  \right),
\end{align}for $\lambda$ in a sufficiently small disc centered at $0.$ Moreover, the index  in the condition $\nu<-n$ is sharp. 
\end{thm}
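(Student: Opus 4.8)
The plan is to verify first that $T=\sigma(x,D)$ actually lies in $\Gamma(\Dcal_2)$ — this is the only place where the hypothesis $\nu<-n$ enters — then to apply the abstract Plemelj--Smithies formula of Theorem~\ref{thm1af} on the Poincar\'e algebra, transport it through the isomorphism $\Gamma$, and finally unfold $\Tr(T^m)$ into the stated cyclic lattice sum. Recall from \eqref{amatf8} that $T=\Gamma(A)$ with $A_{jk}=\widehat{\sigma}(j-k,k)$, so membership in $\Gamma(\Dcal_2)$ reduces to $A\in\ell^{1}(\zn\times\zn)$, i.e., after the substitution $l=j-k$, to $\sum_{l,k\in\zn}\abs{\widehat{\sigma}(l,k)}<\infty$. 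Since $\sigma\in S^{\nu}(\tn\times\zn)$ we have $\abs{\partial_x^{\beta}\sigma(x,k)}\leq C_{\beta}\jp{k}^{\nu}$ uniformly in $x$, and integrating by parts in $x$ in the definition of $\widehat{\sigma}(l,k)$ gives, for every $N$, $\abs{\widehat{\sigma}(l,k)}\leq C_{N}\jp{l}^{-N}\jp{k}^{\nu}$. Choosing $N>n$ yields $\sum_{l,k}\abs{\widehat{\sigma}(l,k)}\leq C_{N}\big(\sum_{l}\jp{l}^{-N}\big)\big(\sum_{k}\jp{k}^{\nu}\big)<\infty$, the last factor being finite precisely because $\nu<-n$; hence $A\in\Dcal_2$ and $T=\Gamma(A)\in\Gamma(\Dcal_2)$.

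Next, $\Gamma$ is an isometric algebra isomorphism $\Dcal_2\to\Gamma(\Dcal_2)$ carrying $\mathcal{F}_{\Dcal_2}$ onto $\mathcal{F}(L^2(\tn))$, and by construction $\Det_{\Gamma}(I+\lambda T)=\Det_{\Dcal_2}(I+\lambda A)$ and $\Tr(T^{m})=\Tr(A^{m})$. Since $\Dcal_2$ is an approximable embedded subalgebra satisfying condition (ii) of Theorem~\ref{th2eqr}, Theorem~\ref{thm1af} gives, for $\lambda$ in a small disc about $0$,
\[
\Det_{\Gamma}(I+\lambda T)=\exp\Big(\sum_{m=1}^{\infty}\frac{(-1)^{m+1}}{m}\,\Tr(A^{m})\,\lambda^{m}\Big).
\]
It remains to compute $\Tr(A^{m})$. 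On $\Dcal_2$ the extended trace is $\Tr(B)=\sum_{j\in\zn}b_{jj}$ in terms of the matrix $(b_{jk})$ of $B$, and the sub-multiplicativity $\n{A^{m}}_{\Dp}\leq\n{A}_{\Dp}^{m}$ guarantees that the iterated series defining the matrix of $A^{m}$ converge absolutely, so this matrix is the $m$-fold product of $(A_{jk})$. Therefore, with $j_m:=j_0$,
\[
\Tr(A^{m})=\sum_{j_0\in\zn}(A^{m})_{j_0 j_0}=\sum_{j_0\in\zn}\ \sum_{j_1,\dots,j_{m-1}\in\zn}\ \prod_{s=1}^{m}A_{j_{s-1}j_s},
\]
and by absolute convergence these sums amalgamate into one sum over $(j_0,\dots,j_m)$ subject to $j_0=j_m$, giving $\Tr(A^{m})=\sum_{j_0=j_m}\prod_{s=1}^{m}\widehat{\sigma}(j_{s-1}-j_s,j_s)$. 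Substituting into the exponential above produces the announced formula, valid for $\abs{\lambda}<\n{A}_{\Dp}^{-1}$ since $\abs{\Tr(A^{m})}\leq\n{A^{m}}_{\Dp}\leq\n{A}_{\Dp}^{m}$.

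For the sharpness of the exponent, it suffices to show $\nu<-n$ cannot be weakened to $\nu\leq -n$. Take the $x$-independent symbol $\sigma(x,k)=\jp{k}^{-n}$, which belongs to $S^{-n}(\tn\times\zn)$. Then $\widehat{\sigma}(l,k)=\jp{k}^{-n}\delta_{l,0}$, so $A_{jk}=\jp{k}^{-n}\delta_{j,k}$ and $\sum_{j,k\in\zn}\abs{A_{jk}}=\sum_{k\in\zn}\jp{k}^{-n}=+\infty$; hence $T=\sigma(x,D)\notin\Gamma(\Dcal_2)$, which shows the threshold is optimal.

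The only genuinely delicate point is the bookkeeping in the trace computation: one must justify, by Fubini together with the bound $\n{A^{m}}_{\Dp}\leq\n{A}_{\Dp}^{m}$, that all the multilinear lattice sums converge absolutely, so that $\Tr(A^{m})$ may be unfolded into the single cyclic sum over $\zn^{\times m}$, and the same bound is what fixes the radius of convergence of the exponential series. The Fourier-coefficient decay estimate in the first step is routine integration by parts, but it is exactly there that the sharp condition $\nu<-n$ originates, so it is worth isolating as a lemma-sized computation.
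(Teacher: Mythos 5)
Your proposal is correct and follows essentially the same route as the paper: the membership $T\in\Gamma(\Dcal_2)$ via the decay estimate $|\widehat{\sigma}(l,k)|\lesssim\jp{l}^{-N}\jp{k}^{\nu}$ (which you derive by integration by parts in $x$, where the paper cites Lemma 4.2.1 of \cite{rt:book}), the unfolding of $\Tr(A^m)$ into the cyclic lattice sum fed into Theorem \ref{thm1af}, and the same counterexample $\sigma(x,k)=\jp{k}^{-n}$ for sharpness. Your added remarks on absolute convergence of the multilinear sums and the explicit radius $|\lambda|<\n{A}_{\Dp}^{-1}$ are correct refinements of what the paper leaves implicit.
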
\label{poincaretn}
\begin{proof} Let    $\sigma\in S^{\nu}(\mathbb{T}^n\times\mathbb{Z}^n).$ In view of Lemma 4.2.1 of \cite{rt:book}, for all $r\in \mathbb{R},$ there exists $C_r>0,$ such that
\begin{equation*}
    |\widehat{\sigma}(j,k)|\leq C_{r}(1+|j|)^{-r}(1+|k|)^{\nu}.
\end{equation*}If $\nu<-n,$ and $r>n,$ then
\begin{align*}
    \Vert T\Vert_{\Gamma(\mathcal{D}_2)}&=\sum_{j,k\in \mathbb{Z}^n}|A_{jk}|=\sum_{j,k\in \mathbb{Z}^n}|\widehat{\sigma}(j-k,k)|\\
     &\lesssim \sum_{j,k\in \mathbb{Z}^n}(1+|j-k|)^{-r}(1+|k|)^{\nu}<\infty.
\end{align*}Let us compute the determinant of $T.$ 
For $m\geq 2,$ observe that
\begin{align*}
    \textnormal{Tr}(A^m)&=\sum_{j_0\in \mathbb{Z}^n}[A^{m}]_{j_{0}j_{0}}\\
   &=\sum_{j_{0}\in \mathbb{Z}^n}\sum_{j_{1}\in \mathbb{Z}^n} A_{j_{0}j_{1}}[A^{m-1}]_{j_{1}j_{0}}\\
   &= \sum_{j_{0}\in \mathbb{Z}^n}\sum_{j_{1}\in \mathbb{Z}^n}\sum_{j_{2}\in \mathbb{Z}^n} A_{j_{0}j_{1}}A_{j_{1}j_{2}}[A^{m-2}]_{j_{2}j_{0}}\\
  &= \sum_{(j_0,j_1,\cdots,j_{m}) \in\mathbb{Z}^{nm}} A_{j_{0}j_{1}}A_{j_{1}j_{2}}\cdots A_{j_{m-1}j_{m}}\\
  &= \sum_{  \begin{subarray}{l} (j_0,j_1,\cdots,j_{m}) \in\mathbb{Z}^{nm}\\ \,\,\,\,\,\,\,\,\,\,\,\,\,\,\,\,\,j_{0}=j_m  \end{subarray}}  \prod_{s=1}^{m}A_{j_{s-1}j_{s}}\\
   &= \sum_{  \begin{subarray}{l} (j_0,j_1,\cdots,j_{m}) \in\mathbb{Z}^{nm}\\ \,\,\,\,\,\,\,\,\,\,\,\,\,\,\,\,\,j_{0}=j_m  \end{subarray}}  \prod_{s=1}^{m}\widehat{\sigma}(j_{s-1}-j_{s},j_{s}).
\end{align*} Now, we conclude the determinant formula by using Theorem \ref{thm1af}. To see the sharpness of the order condition $\nu<-n$ let us analyse the operator $T$ with symbol $\sigma_0(x,k):=(1+|k|)^{-n}\in S^{m'}(\mathbb{T}^n\times \mathbb{Z}^n)$ for all $m'\geq -n$. In this case, $\widehat{\sigma}_0(j-k,k)=(1+|k|)^{-n}\delta_{0,j-k}=(1+|k|)^{-n}\delta_{j,k}.$ Observe that 
\begin{align*}
    \Vert T\Vert_{\Gamma(\mathcal{D}_2)}&=\sum_{j,k\in \mathbb{Z}^n}|A_{jk}|=\sum_{k\in \mathbb{Z}^n}(1+|k|)^{-n}=\infty,
    \end{align*}which implies that $T\notin \Gamma(\mathcal{D}_2).$ Thus, we end the proof.
\end{proof}
\begin{rem} If  $\sigma\in S^{\nu}(\tn\times\zn)$ with $\nu<-n,$ then $T\in S_{1}(L^2(\tn))$ and one has
\begin{align}
   \textnormal{Det}(I+\lambda T)=\exp\left(\sum\limits_{m=1}^{\infty}\frac{(-1)^{m+1}}{m}\lambda ^m \sum_{  \begin{subarray}{l} (j_0,j_1,\cdots,j_{m}) \in\mathbb{Z}^{nm}\\ \,\,\,\,\,\,\,\,\,\,\,\,\,\,\,\,\,j_{0}=j_m  \end{subarray}}  \prod_{s=1}^{m}\widehat{\sigma}(j_{s-1}-j_{s},j_{s})  \right),
\end{align}for $\lambda$ in a sufficiently small disc centered at $0.$
\end{rem}
\begin{rem}\label{rem:poincare} On a separable Hilbert space $\mathcal{H},$ if a bounded linear operator $T:\mathcal{H}\rightarrow \mathcal{H} $ satisfies
\begin{equation*}
    \sum_{k,j\in J}|(Te_{k},e_{j})|<\infty,
\end{equation*} for some orthonormal basis $\{e_{j}\}_{j\in J}$ of $H,$ then  $T\in S_{1}(\mathcal{H})$ (cf. \cite[Page 172]{Sug90}). Consequently,  with the particular choice of the canonical orthonormal basis of $\ell^2(\mathbb{Z}^n),$ the condition $A\in \mathcal{D}_{2}$ implies that $A\in S_{1}(\ell^2(\mathbb{Z}^n)).$ Because  $\mathcal{D}_{2}\subset S_{1}(\ell^2(\mathbb{Z}^n)), $ we also have the  inclusion $\Gamma(\mathcal{D}_{2})\subset S_{1}(L^2(\mathbb{T}^n)) $ of the Poincar\'e algebra into the ideal of trace class operators on $L^2(\mathbb{T}^n).$ In view of Theorem \ref{poincaretn} and Remark \ref{rem:poincare}, we have
$$ \Psi^{\nu}(\mathbb{T}^n,\textnormal{loc})\subset \Gamma(\mathcal{D}_{2})\subset S_{1}(L^2(\mathbb{T}^n)),  $$
for all $\nu<-n.$
\end{rem}
\begin{rem} The inclusion $\mathcal{D}_{2}\subset S_{1}(\ell^2(\mathbb{Z}^n)) $ is strict. Indeed, consider the diagonalisable operator $A=(a_{jk})_{j,k\in \mathbb{Z}}$ in the canonical basis of $\ell^{2}(\mathbb{Z}),$ whose entries   $a_{jk}$ are given by $a_{1k}=1/k,$ $k\geq 1,$ $a_{jj}=1/j^2,$ $j\in \mathbb{Z}\setminus\{0\},$ and $a_{ij}=0$ in other case. The system of eigenvalues of $A$ is the set $\{1/j^2:j\in \mathbb{Z}\},$ which implies that $A\in S_{1}(\ell^2(\mathbb{Z}^n)),$ but $\Vert A\Vert_{\mathcal{D}_2}\geq \sum_{k= 1}^{\infty}1/k=\infty.$ This analysis also leads to the strict inclusion $\Gamma(\mathcal{D}_{2})\subset S_{1}(L^2(\mathbb{T}^n)). $

\end{rem}

\subsection{Determinant of nuclear operators on $\ell^p(\mathbb{Z})$} Having proved the trace formula for nuclear operators in Theorem \ref{discret}, in the next result we present the corresponding determinant formula.
\begin{thm}\label{detzn} Let $1\leq p<\infty$ and let $K:\zet\times\zet\rightarrow\ce $ be a function satisfying
	\beq  \sum\limits_{j\in\zet}\left(\sum\limits_{m\in\zet}|K(j,m)|^{p}\right)^{\frac{1}{p}}<\infty .\label{sc2'}\eq
	Then the relation $K(i,j)=\langle Te_j,e_i\rangle, (i,j)\in\zet\times\zet$ defines a nuclear operator $T:\ell^{p}({\zet})\rightarrow \ell^{p}({\zet}) $. Moreover, one has \begin{align}\label{sc2}
   \textnormal{Det}(I+\lambda T)=\exp\left(\sum\limits_{m=1}^{\infty}\frac{(-1)^{m+1}}{m}\lambda ^m \sum_{  \begin{subarray}{l} (j_0,j_1,\cdots,j_{m}) \in\mathbb{Z}^m\\ \,\,\,\,\,\,\,\,\,\,\,\,\,\,\,\,\,j_{0}=j_m  \end{subarray}}  \prod_{s=1}^{m}K(j_{s-1},j_{s})  \right),
\end{align}for $\lambda$ in a sufficiently small disc centered at $0.$
\end{thm}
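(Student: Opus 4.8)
The plan is to reduce Theorem \ref{detzn} to the combination of Proposition \ref{discret} and Theorem \ref{thm1af}, exactly as was done for the torus in Theorem \ref{poincaretn}. First I would invoke Proposition \ref{discret} directly: the hypothesis \eqref{sc2'} is precisely condition \eqref{sc} in dimension $n=1$, so it already gives that the kernel $K$ defines a nuclear operator $T:\ell^p(\zet)\to\ell^p(\zet)$ with $K(i,j)=\langle Te_j,e_i\rangle$ and $\Tr(T)=\sum_{n\in\zet}K(n,n)$. Since $\mathcal{N}(\ell^p(\zet))$ is an approximable embedded subalgebra (the space $\ell^p$ has the approximation property) on which the determinant $\Det(I+F)$ extends continuously from $\mathcal{F}_{\mathcal{D}}$, Theorem \ref{thm1af} applies and yields, for $\lambda$ in a small disc about $0$,
\[
\Det(I+\lambda T)=\exp\left(\sum_{m=1}^{\infty}\frac{(-1)^{m+1}}{m}\Tr(T^m)\,\lambda^m\right).
\]
So the remaining task is purely to identify $\Tr(T^m)$ with the stated multi-index sum.

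The key computational step is therefore to show $T^m$ is again nuclear (in fact it suffices that it is trace class, or one can directly check the kernel of $T^m$ satisfies \eqref{sc2'}) and that
\[
\Tr(T^m)=\sum_{\substack{(j_0,j_1,\dots,j_m)\in\zet^m\\ j_0=j_m}}\ \prod_{s=1}^{m}K(j_{s-1},j_s).
\]
This is the same bookkeeping already carried out in the proof of Theorem \ref{poincaretn}: the operator $T$ has matrix $[T]_{ij}=K(i,j)$ in the canonical basis, the composition law gives $[T^m]_{j_0 j_0}=\sum_{j_1,\dots,j_{m-1}\in\zet}K(j_0,j_1)K(j_1,j_2)\cdots K(j_{m-1},j_0)$, and summing over the diagonal index $j_0$ (relabelled $j_m=j_0$) produces the displayed cyclic sum. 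To justify interchanging the sums and the trace one uses that, by \eqref{sc2'} and Young's / Hölder-type estimates, the iterated series converges absolutely; concretely $\sum_{j_0}\sum_{j_1,\dots,j_{m-1}}|K(j_0,j_1)|\cdots|K(j_{m-1},j_0)|$ is bounded by a product of $\ell^1\to\ell^1$ and $\ell^p\to\ell^p$ operator-norm-type factors controlled by $\|K\|$, and one also checks that the kernel $K_m(i,j):=[T^m]_{ij}$ of $T^m$ again satisfies the summability condition \eqref{sc2'}, so that Proposition \ref{discret} applies to $T^m$ and gives $\Tr(T^m)=\sum_{j_0}K_m(j_0,j_0)$, which is the cyclic sum above.

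Substituting this expression for $\Tr(T^m)$ into the Plemelj-Smithies exponential formula from Theorem \ref{thm1af} gives precisely \eqref{sc2}, completing the proof. The main obstacle is the one analytic point hidden in the middle step: one must be careful that $T^m$ genuinely lies in the algebra $\mathcal{N}(\ell^p(\zet))$ (not merely that $T$ does) so that $\Tr(T^m)$ is the honest nuclear trace and the diagonal-sum formula of Proposition \ref{discret} is legitimately available; this is where the absolute convergence estimate coming from \eqref{sc2'} does the real work, and for $p>1$ one should phrase it via the factorization $\ell^p\xrightarrow{T}\ell^p$ composed with the bound $\|T\|_{\ell^p\to\ell^p}\le\sum_{j\in\zet}\|Te_j\|_{\ell^p}$ established inside the proof of Proposition \ref{discret}. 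Everything else is the same cyclic-trace identity and the same appeal to Theorem \ref{thm1af} already used for the Poincaré determinant on $\Tn$.
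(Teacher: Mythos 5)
Your proposal follows essentially the same route as the paper's own proof: invoke Proposition \ref{discret} for nuclearity and the diagonal trace formula, compute the kernel of $T^m$ by iterating the matrix composition to obtain the cyclic sum for $\Tr(T^m)$, and substitute into the Plemelj--Smithies expansion of Theorem \ref{thm1af}. If anything, you are more careful than the paper, which leaves the verification that $T^m$ again satisfies the summability condition (and hence that its trace is legitimately the diagonal sum) implicit.
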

\begin{proof}
Straightforward computation shows that for $m\geq 2,$ the kernel of $T^m$ is given by
\begin{equation*}
    K_{T^m}(j_{0},j_{m})=\sum_{(j_1,\cdots,j_{m-1})}K(j_0,j_1)K(j_1,j_2)\cdots K(j_m,j_{m-1}).
\end{equation*} So, summing $K_{T^m}(j_{0},j_{m})$ over $j_0=j_1\in \mathbb{Z}$ we obtain the trace of $T^m,$ (this, in view of Proposition \ref{discret} and \eqref{sc2} follows from Theorem \ref{thm1af}.
\end{proof}
\subsection{Determinant of invariant operators on Hilbert spaces}

We now recall the notion of invariant operators introduced in \cite{dr14a:fsymbsch} and which is based on the following theorem:
\begin{thm}\label{THM:inv-rem}
Let $\Hcal$ be a complex Hilbert space and let $\Hcal^{\infty}\subset \Hcal$ be a dense
linear subspace of $\Hcal$. Let $\{d_{j}\}_{j\in\N_{0}}\subset\N$ and let
$\{e_{j}^{k}\}_{j\in\N_{0}, 1\leq k\leq d_{j}}$ be an
orthonormal basis of $\Hcal$ such that
$e_{j}^{k}\in \Hcal^{\infty}$ for all $j$ and $k$. Let $H_{j}:={\rm span} \{e_{j}^{k}\}_{k=1}^{d_{j}}$,
and let $P_{j}:\Hcal\to H_{j}$ be the orthogonal projection.
For $f\in\Hcal$, we denote $\widehat{f}(j,k):=(f,e_{j}^{k})_{\Hcal}$ and let
$\widehat{f}(j)\in \ce^{d_{j}}$ denote the column of $\widehat{f}(j,k)$, $1\leq k\leq d_{j}.$
Let $T:\Hcal^{\infty}\to \Hcal$ be a linear operator.
Then the following
conditions are equivalent:
\begin{itemize}
\item[(A)] For each $j\in\ene_0$, we have $T(H_j)\subset H_j$. 
\item[(B)] For each $\ell\in\ene_0$ there exists a matrix 
$\sigma_{T}(\ell)\in\ce^{d_{\ell}\times d_{\ell}}$ such that for all $e_j^k$ 
$$
\widehat{Te_j^k}(\ell,m)=\sigma_{T}(\ell)_{mk}\delta_{j\ell}.
$$
\item[(C)]  For each $\ell\in\ene_0 $ there exists a matrix 
$\sigma_{T}(\ell)\in\ce^{d_{\ell}\times d_{\ell}}$ such that
 \[\widehat{Tf}(\ell)=\sigma_{T}(\ell)\widehat{f}(\ell)\]
 for all $f\in\Hcal^{\infty}.$
\end{itemize}

The matrices $\sigma_{T}(\ell)$ in {\rm (B)} and {\rm (C)} coincide.

The equivalent properties {\rm (A)--(C)} follow from the condition 
\begin{itemize}
\item[(D)] For each $j\in\ene_0$, we have
$TP_j=P_jT$ on $\Hcal^{\infty}$.
\end{itemize}
If, in addition, $T$ extends to a bounded operator
$T\in{\mathscr L}(\Hcal)$ then {\rm (D)} is equivalent to {\rm (A)--(C)}.
\end{thm}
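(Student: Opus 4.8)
\emph{Proof strategy.} Everything rests on the orthogonal decomposition $\Hcal=\bigoplus_{j}H_{j}$ and on the identity $P_{j}f=\sum_{k=1}^{d_{j}}\widehat f(j,k)e_{j}^{k}$; in particular each $H_{j}$ is a finite span of basis vectors, so $H_{j}\subset\Hcal^{\infty}$ and $P_{j}(\Hcal^{\infty})\subset\Hcal^{\infty}$, and hence $T$, $TP_{j}$ and $P_{j}T$ all make sense on $\Hcal^{\infty}$. I would run the argument as the cycle $(A)\Rightarrow(B)\Rightarrow(C)\Rightarrow(A)$, then prove $(D)\Rightarrow(A)$ and $(D)\Rightarrow(C)$ directly (so that $(D)$ entails all of $(A)$--$(C)$), and finally $(A)\Rightarrow(D)$ under the extra hypothesis $T\in\mathscr{L}(\Hcal)$.

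\emph{The formal implications.} For $(A)\Rightarrow(B)$: since $Te_{j}^{k}\in H_{j}$, expand it in the basis of $H_{j}$ and set $\sigma_{T}(j)_{mk}:=\widehat{Te_{j}^{k}}(j,m)$; for $\ell\neq j$ orthogonality of $H_{j}$ and $H_{\ell}$ gives $\widehat{Te_{j}^{k}}(\ell,m)=0$, and the two cases together read $\widehat{Te_{j}^{k}}(\ell,m)=\sigma_{T}(\ell)_{mk}\delta_{j\ell}$. For $(C)\Rightarrow(A)$: apply $(C)$ to $f=e_{j}^{k}$; since $\widehat{e_{j}^{k}}(\ell)$ vanishes when $\ell\neq j$ and is the $k$-th standard basis vector of $\ce^{d_{j}}$ when $\ell=j$, we get $\widehat{Te_{j}^{k}}(\ell)=0$ for $\ell\neq j$ and $\widehat{Te_{j}^{k}}(j)=$ the $k$-th column of $\sigma_{T}(j)$, so $Te_{j}^{k}$ has Fourier coefficients only at level $j$, i.e. $Te_{j}^{k}\in H_{j}$; these vectors span $H_{j}$, and this also identifies the matrices appearing in $(B)$ and $(C)$. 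For $(D)\Rightarrow(A)$: if $h\in H_{j}$ then $h=P_{j}h\in\Hcal^{\infty}$ and $Th=TP_{j}h=P_{j}Th\in H_{j}$. For $(D)\Rightarrow(C)$: $P_{\ell}Tf=TP_{\ell}f=\sum_{k}\widehat f(\ell,k)Te_{\ell}^{k}$, and reading off the $m$-th coordinate, with $Te_{\ell}^{k}\in H_{\ell}$, gives $\widehat{Tf}(\ell,m)=(\sigma_{T}(\ell)\widehat f(\ell))_{m}$.

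\emph{The implication $(B)\Rightarrow(C)$ and the bounded case.} Fix $f\in\Hcal^{\infty}$ and $\ell\in\ene_{0}$ and split $f=P_{\ell}f+g$ with $g=(I-P_{\ell})f\in\Hcal^{\infty}$, $\widehat g(\ell)=0$. As $P_{\ell}f$ is a \emph{finite} combination of the $e_{\ell}^{k}$, linearity of $T$ and $(B)$ yield $\widehat{T(P_{\ell}f)}(\ell)=\sigma_{T}(\ell)\widehat f(\ell)$. The remaining point is $\widehat{Tg}(\ell)=0$: expanding $g=\sum_{j\neq\ell}P_{j}g$ and using that $(B)$ forces $T(H_{j})\subset H_{j}$, each term $T(P_{j}g)$ lies in $H_{j}\perp H_{\ell}$, so $Tg\perp H_{\ell}$; adding the two contributions gives $(C)$ with the same symbol as in $(B)$. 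The bounded case $(A)\Rightarrow(D)$ is handled the same way: for $f\in\Hcal^{\infty}\subset\Hcal$ expand $f=\sum_{i}P_{i}f$ in $\Hcal$, use norm-continuity of $T$ to get $Tf=\sum_{i}T(P_{i}f)$ with $T(P_{i}f)\in H_{i}$ by $(A)$, and project onto $H_{j}$.

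\emph{Main obstacle.} The only non-formal point is the second half of $(B)\Rightarrow(C)$ — passing $T$ through the series $g=\sum_{j\neq\ell}P_{j}g$ to conclude that the part of $f$ orthogonal to $H_{\ell}$ is sent into $H_{\ell}^{\perp}$. For $T\in\mathscr{L}(\Hcal)$ this is mere norm-continuity, exactly as used for $(A)\Rightarrow(D)$. For a general densely defined $T$ one uses that in the situations of interest $\Hcal^{\infty}$ carries its natural (Fr\'echet) topology, the eigenfunction expansion of any $f\in\Hcal^{\infty}$ converges in that topology, and $T$ — a (pseudo-)differential type operator in the applications — is continuous for it, so the series may again be carried past $T$.
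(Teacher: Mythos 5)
The paper itself offers no proof of this theorem: it is recalled verbatim from \cite{dr14a:fsymbsch}, so there is nothing internal to compare your argument against, and I assess it on its own terms. Your cycle is well chosen and most steps are correct: (A)$\Rightarrow$(B) by expanding $Te_j^k$ in the basis of $H_j$ and using orthogonality of the $H_j$'s; (C)$\Rightarrow$(A) by testing on $f=e_j^k$ (which also identifies the two symbols); (D)$\Rightarrow$(A) and (D)$\Rightarrow$(C) via the \emph{finite} sum $TP_\ell f=\sum_k\widehat f(\ell,k)Te_\ell^k$; and (A)$\Rightarrow$(D) for bounded $T$ by norm-continuity applied to the partial sums $\sum_{i\le N}P_if$.

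The genuine gap is the one you flag yourself, in (B)$\Rightarrow$(C): to get $\widehat{Tg}(\ell)=0$ for $g=(I-P_\ell)f$ you must push $T$ through the infinite expansion $g=\sum_{j\ne\ell}P_jg$, and the hypotheses give you no continuity of $T$ whatsoever --- only linearity on a dense subspace. Your proposed remedy (a Fr\'echet topology on $\Hcal^{\infty}$ in which the expansion converges and for which $T$ is continuous) imports assumptions that are not in the statement, and this is not a removable technicality: the implication fails in the stated generality. Take $\Hcal=\ell^2(\N_0)$, $d_j=1$, $e_j$ the standard basis, $v=\sum_j 2^{-j}e_j$, and $\Hcal^{\infty}$ the algebraic span of $\{e_j\}_j\cup\{v\}$ (a dense subspace containing every $e_j$, of which $\{e_j\}_j\cup\{v\}$ is a Hamel basis); define $Te_j=e_j$ and $Tv=0$. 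Then (A) and (B) hold with $\sigma_T(\ell)=1$, yet (C) would force $\widehat{Tv}(\ell)=2^{-\ell}\ne 0$. So you must either add an explicit continuity hypothesis (continuity of $T$ from $\Hcal^{\infty}$, equipped with a topology in which $\sum_{j\le N}P_jf\to f$, into $\Hcal$ --- which is what the applications in \cite{dr14a:fsymbsch}, with $\Hcal^{\infty}=C^{\infty}(M)$, actually supply), or take $\Hcal^{\infty}$ to be the algebraic span of the $e_j^k$ (making (B)$\Rightarrow$(C) finite linearity), or weaken the conclusion to: (A)$\Leftrightarrow$(B), (C)$\Rightarrow$(B), and (D)$\Rightarrow$(C), with (B)$\Rightarrow$(C) only under boundedness or an analogous continuity assumption. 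As written, your argument does not prove the statement in its literal generality --- though no argument could, since the statement itself needs the extra hypothesis.
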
 

Under the assumptions of Theorem \ref{THM:inv-rem}, we have the direct sum 
decomposition
\begin{equation}\label{EQ:sum}
\Hcal = \bigoplus_{j=0}^{\infty} H_{j},\quad H_{j}={\rm span} \{e_{j}^{k}\}_{k=1}^{d_{j}},
\end{equation}
and we have $d_{j}=\dim H_{j}.$
The two applications that we will consider will be with $\Hcal=L^{2}(M)$ for a
compact manifold $M$ with $H_{j}$ being the eigenspaces of an elliptic 
pseudo-differential operator $E$, or with $\Hcal=L^{2}(G)$ for a compact Lie group
$G$ with $H_{j}=\textrm{span}\{\xi_{km}\}_{1\leq k,m\leq d_{\xi}}$ for a
unitary irreducible representation $\xi\in[\xi_{j}]\in\widehat{G}$. In this case we will write $\sigma_{T}:=\sigma_{T,E}.$  The difference
is that in the first case we will have that the eigenvalues of $E$ corresponding
to $H_{j}$'s are all distinct, while in the second case the eigenvalues of the Laplacian
on $G$ for which $H_{j}$'s are the eigenspaces, may coincide.

In view of properties (A) and (C), respectively, an operator $T$ satisfying any of
the equivalent properties (A)--(C) in
Theorem \ref{THM:inv-rem}, will be called an {\em invariant operator}, or
a {\em Fourier multiplier relative to the decomposition
$\{H_{j}\}_{j\in\N_{0}}$} in \eqref{EQ:sum}.
If the collection $\{H_{j}\}_{j\in\N_{0}}$
is fixed once and for all, we can just say that $T$ is {\em invariant}
or a {\em Fourier multiplier}.

The family of matrices $\sigma$ will be
called the {\em matrix symbol of $T$ relative to the partition $\{H_{j}\}$ and to the
basis $\{e_{j}^{k}\}$.}
It is an element of the space $\Sigma$ defined by
\begin{equation}\label{EQ:Sigma1}
\Sigma=\{\sigma:\N_{0}\ni\ell\mapsto\sigma(\ell)\in \ce^{d_{\ell}\times d_{\ell}}\}.
\end{equation}
In this section we will investigate the concept of determinant on embedded in the sense of \cite{goh:trace}   
for the notion of invariant operators introduced in \cite{dr14a:fsymbsch}. We recall below the Plemelj-Smithies formula:\\

\begin{thm}[Plemelj-Smithies formula]\label{thm0a}
If $T\in S_1(\Hcal)$ one has
\[\Det(I+\lambda T)=\exp\left(\sum\limits_{m=1}^{\infty}\frac{(-1)^{m+1}}{m}\Tr (T^m)\lambda ^m\right),  \]
for $\lambda\in \mathbb{C}$ with  $|\lambda|$ small enough. \\
\end{thm}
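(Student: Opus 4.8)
The plan is to observe that $S_1(\Hcal)$ is itself an approximable embedded subalgebra of $\mathcal{L}(\Hcal)$ satisfying the hypotheses of Theorems \ref{th2eqr} and \ref{thm1af}, so that the asserted identity becomes a special case of Theorem \ref{thm1af}. First I would verify the two conditions in the definition of an embedded subalgebra for the pair $(S_1(\Hcal),\|\cdot\|_{S_1})$: since $S_1(\Hcal)$ is a two-sided ideal in $\mathcal{L}(\Hcal)$ it is in particular a subalgebra; the estimate $\|A\|_{\mathcal{L}(\Hcal)}=s_1(A)\le\sum_{k}s_k(A)=\|A\|_{S_1}$ gives condition (i); and $\|AB\|_{S_1}\le\|A\|_{\mathcal{L}(\Hcal)}\|B\|_{S_1}\le\|A\|_{S_1}\|B\|_{S_1}$ gives condition (ii). Approximability is the standard fact that $\mathcal{F}(\Hcal)$ is $\|\cdot\|_{S_1}$-dense in $S_1(\Hcal)$, obtained by truncating the Schmidt expansion of $A\in S_1(\Hcal)$.

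Next I would check condition (ii) of Theorem \ref{th2eqr}: for $F\in\mathcal{F}(\Hcal)$ one has $|\Tr(F)|=\big|\sum_{k}(F\phi_k,\phi_k)\big|\le\sum_{k}s_k(F)=\|F\|_{S_1}$ in any orthonormal basis $\{\phi_k\}$, so $\Tr$ is bounded in the $S_1$-norm on $\mathcal{F}(\Hcal)$. By Theorem \ref{th2eqr}, $\Det(I+\cdot)$ extends continuously from $\mathcal{F}(\Hcal)$ to $S_1(\Hcal)$, and with it the trace. Two routine identifications then have to be recorded: (a) the abstractly extended trace on $S_1(\Hcal)$ coincides with the Hilbert-space trace $T\mapsto\sum_k(T\phi_k,\phi_k)$, since both functionals are $S_1$-continuous and agree on $\mathcal{F}(\Hcal)$; and (b) for each $m\ge 1$, $T\in S_1(\Hcal)$ implies $T^m\in S_1(\Hcal)$ with $\|T^m\|_{S_1}\le\|T\|_{\mathcal{L}(\Hcal)}^{m-1}\|T\|_{S_1}\le\|T\|_{S_1}^m$, so each $\Tr(T^m)$ is well defined and satisfies $|\Tr(T^m)|\le\|T\|_{S_1}^m$; the latter bound makes the series $\sum_{m\ge 1}\frac{(-1)^{m+1}}{m}\Tr(T^m)\lambda^m$ absolutely convergent for $|\lambda|<1/\|T\|_{S_1}$.

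Granting these points, the theorem follows at once from Theorem \ref{thm1af} applied with $\mathcal{D}=S_1(\Hcal)$ and $A=T$. If instead one wanted a self-contained argument not invoking Theorem \ref{thm1af}, I would first treat $F\in\mathcal{F}(\Hcal)$: writing $\lambda_j(F)$ for the nonzero eigenvalues of $F$ counted with algebraic multiplicity, $\Det(I+\lambda F)=\prod_j(1+\lambda\lambda_j(F))$, so for $|\lambda|<1/\|F\|_{S_1}$ (whence $|\lambda\lambda_j(F)|<1$ and $\sum_j|\lambda_j(F)|\le\|F\|_{S_1}$) one has $\log\Det(I+\lambda F)=\sum_j\log(1+\lambda\lambda_j(F))=\sum_{m\ge1}\frac{(-1)^{m+1}}{m}\lambda^m\sum_j\lambda_j(F)^m=\sum_{m\ge1}\frac{(-1)^{m+1}}{m}\Tr(F^m)\lambda^m$, the interchange of sums being justified by absolute convergence and the identity $\sum_j\lambda_j(F)^m=\Tr(F^m)$ by the Jordan form. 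One then passes to the limit along $F_n\to T$ in $S_1$-norm on a fixed disc $|\lambda|<1/(\|T\|_{S_1}+1)$, using the $S_1$-continuity of $F\mapsto\Det(I+\lambda F)$ and of $F\mapsto\Tr(F^m)$ together with the Weierstrass theorem. I do not expect a genuine obstacle here; the only thing requiring care is the bookkeeping of the two identifications above and a uniform control of the radius of the disc along the approximating sequence.
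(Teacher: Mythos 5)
Your proof is correct; the paper in fact states this theorem without proof, simply recalling it as the classical Plemelj--Smithies formula within the Gohberg--Goldberg--Krupnik framework of Section~\ref{sec2}. Your derivation --- verifying that $(S_1(\Hcal),\|\cdot\|_{S_1})$ is an approximable embedded subalgebra with trace bounded on $\mathcal{F}(\Hcal)$ and then specializing Theorem~\ref{thm1af} to $\mathcal{D}=S_1(\Hcal)$ --- is precisely the intended justification, and the two identifications you flag (the extended trace coincides with the Hilbert-space trace, and $\Tr(T^m)$ is well defined with $|\Tr(T^m)|\le\|T\|_{S_1}^m$) are handled correctly.
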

As an application of the notion of invariant operators given by Theorem \ref{THM:inv-rem} we have:   
\begin{thm}\label{thm1a}  If $T\in S_1(\Hcal)$ is a Fourier multiplier, 
one has
\[\Det(I+\lambda T)=\exp\left(\sum\limits_{m=1}^{\infty}\frac{(-1)^{m+1}}{m} \lambda ^m\sum\limits_{\ell=0}^{\infty} \Tr(\sigma_{T}(\ell)^m)  \right),  \]
for $\lambda\in \mathbb{C}$ with  $|\lambda|$ small enough. 
\end{thm}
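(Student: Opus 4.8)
The plan is to combine the Plemelj-Smithies formula from Theorem \ref{thm0a} with the structure of a Fourier multiplier established in Theorem \ref{THM:inv-rem}. Since $T\in S_1(\Hcal)$ is invariant relative to the decomposition $\Hcal=\bigoplus_{j} H_j$, property (A) gives $T(H_j)\subset H_j$ for every $j$, and by property (C) the action of $T$ on each block is multiplication by the matrix symbol $\sigma_T(\ell)\in\ce^{d_\ell\times d_\ell}$. The first step is to observe that this block-diagonal structure is preserved under composition: for any $m\geq 1$, $T^m$ is again a Fourier multiplier relative to the same decomposition, with matrix symbol $\sigma_{T^m}(\ell)=\sigma_T(\ell)^m$. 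This is immediate from property (C) since $\widehat{T^mf}(\ell)=\sigma_T(\ell)^m\widehat{f}(\ell)$, and it is legitimate because $T\in S_1(\Hcal)$ implies $T$ is bounded, hence $T^m$ is bounded and the last clause of Theorem \ref{THM:inv-rem} applies.

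The second step is to compute $\Tr(T^m)$ in terms of the matrix symbols. Choosing the orthonormal basis $\{e_j^k\}$ adapted to the decomposition, we have
\[
\Tr(T^m)=\sum_{j=0}^{\infty}\sum_{k=1}^{d_j}(T^m e_j^k, e_j^k)_{\Hcal}=\sum_{j=0}^{\infty}\sum_{k=1}^{d_j}\sigma_{T^m}(j)_{kk}=\sum_{j=0}^{\infty}\Tr\bigl(\sigma_T(j)^m\bigr),
\]
where the middle equality uses property (B) (which forces $\widehat{T^m e_j^k}(j,m')=\sigma_{T^m}(j)_{m'k}$, so the diagonal inner product picks out $\sigma_{T^m}(j)_{kk}$), and $\sigma_{T^m}(j)=\sigma_T(j)^m$ from the first step. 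The interchange of the finite inner sum with the outer sum and the absolute convergence of the resulting series is justified by $T^m\in S_1(\Hcal)$, which guarantees that $\sum_j\Tr(\sigma_T(j)^m)$ converges absolutely regardless of basis ordering (this is exactly the well-definedness of the trace recalled at the end of Section \ref{sec2}).

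The third step is simply to substitute this expression for $\Tr(T^m)$ into the Plemelj-Smithies formula of Theorem \ref{thm0a}: for $|\lambda|$ small enough,
\[
\Det(I+\lambda T)=\exp\left(\sum_{m=1}^{\infty}\frac{(-1)^{m+1}}{m}\Tr(T^m)\lambda^m\right)=\exp\left(\sum_{m=1}^{\infty}\frac{(-1)^{m+1}}{m}\lambda^m\sum_{\ell=0}^{\infty}\Tr\bigl(\sigma_T(\ell)^m\bigr)\right),
\]
which is the claimed identity.

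The main obstacle is really a bookkeeping matter rather than a deep one: making sure that $T^m$ genuinely inherits the invariance and that the double sum $\sum_m\sum_\ell$ is handled correctly. One must confirm that $T^m\in S_1(\Hcal)$ (true, since $S_1$ is an ideal and $T$ bounded, indeed $T\in S_1$ already), so that each inner trace $\sum_\ell \Tr(\sigma_T(\ell)^m)$ is an absolutely convergent series equal to $\Tr(T^m)$; the outer sum over $m$ then converges for small $|\lambda|$ by the general Plemelj-Smithies estimate, so no additional interchange of limits beyond what Theorem \ref{thm0a} already provides is needed. The only genuinely new input over Theorem \ref{thm0a} is the identity $\sigma_{T^m}=\sigma_T^m$ together with the block-diagonal trace computation, both of which follow directly from Theorem \ref{THM:inv-rem}.
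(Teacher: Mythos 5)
Your proof is correct and follows essentially the same route as the paper: establish that $T^m$ is again invariant with $\sigma_{T^m}(\ell)=\sigma_T(\ell)^m$, deduce $\Tr(T^m)=\sum_{\ell}\Tr(\sigma_T(\ell)^m)$, and substitute into the Plemelj--Smithies formula of Theorem \ref{thm0a}. The only cosmetic difference is that you derive the trace identity directly from the adapted orthonormal basis via property (B), whereas the paper simply cites the trace formula $\Tr(T)=\sum_{\ell}\Tr(\sigma_T(\ell))$ from \cite{dr14a:fsymbsch}.
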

\begin{proof}
We recall (cf. \cite{dr14a:fsymbsch}, Theorem 2.5) that if $T\in S_1(\Hcal)$ and is invariant then 
\begin{equation}\label{EQ:Sch2}
\Tr(T)=\sum\limits_{\ell=0}^{\infty}\Tr(\sigma_{T}(\ell)).
\end{equation}
Now, $T^m$ is also invariant and $T^m\in S_1(\Hcal)$ since $S_1(\Hcal)$ is an ideal. On the other hand
\[\sigma_{T^m}(\ell)=(\sigma_{T}(\ell))^m.\]
Therefore
\[\Tr(T^m)=\sum\limits_{\ell=0}^{\infty}\Tr(\sigma_{T}(\ell)^m).\]
The proof can be concluded by an application of the Plemelj-Smithies formula.
\end{proof}

\subsection{Plemelj-Smithies formulas on $L^2(M)$}\label{sec4}
In particular we will consider $\Hcal=L^{2}(M)$ for a
compact manifold without boundary $M$ with $H_{j}$ being the eigenspaces of an elliptic 
pseudo-differential operator $E$ of order $\nu$ on $M$. We denote by $\Psi^{\nu}(M)$ the H\"ormander class of pseudo-differential 
 operators of order $\nu\in\er$,
 i.e. operators which, in every coordinate chart, are operators in H\"ormander classes 
 on $\Rn$ with symbols
 in $S^\nu_{1,0}$, see e.g. \cite{shubin:r} or \cite{rt:book}.
 In this paper we will be using the class  $\Psi^{\nu}_{cl}(M)$ of classical operators, i.e. operators
 with symbols having (in all local coordinates) an asymptotic expansion of the symbol in
 positively homogeneous components (see e.g. \cite{Duis:BK-FIO-2011}).
  This means that for any local chart $U$, the operator $A\in \Psi^{\nu}_{cl}(M),$ has the form
\[Au(x)=\int\limits_{T^{*}_xU} e^{ix\xi}\sigma^A(x,\xi)\widehat{u}(\xi) d\xi\]
where  $\sigma^A(x,\xi)$  is a smooth function on $T^{*} U\cong U\times\mathbb{R}^n,$ $T^{*} _{x}U=\mathbb{R}^n$,  admitting an asymptotic expansion 
\begin{equation}\label{Eq:asym-homg}
\sigma^A(x,\xi)\sim \sum_{j=0}^{\infty}\sigma_{m-j}(x,\xi)
\end{equation}
where $\sigma_{m-j}(x,\xi)$ are homogeneous functions in $\xi\neq 0,$ of degree $m-j$ for $\xi$ far  from zero. The set of classical pseudo-differential operators of order $m$ is denoted by $\Psi^m_{cl}(M)$.

 Furthermore, we denote by $\Psi_{+}^{\nu}(M)$ the class of positive definite operators in 
 $\Psi^{\nu}_{cl}(M)$,
 and by $\Psi_{e}^{\nu}(M)$ the class of elliptic operators in $\Psi^{\nu}_{cl}(M)$. Finally, 
 $$\Psi_{+e}^{\nu}(M):=\Psi_{+}^{\nu}(M)\cap \Psi_{e}^{\nu}(M)$$ 
 will denote the  class of classical positive elliptic 
 pseudo-differential operators of order $\nu$.
 We note that complex powers of such operators are well-defined, see e.g.
 Seeley \cite{Seeley:complex-powers-1967}. 

\begin{cor} If $M$ be a closed manifold of dimension $n$, and $E\in\Psi_{+e}^{\nu}(M)$. If $A=(I+E)^{-\frac{\alpha}{\nu}}$ with $\alpha>n$ then
$A\in S_1(L^2(M))$ and 
\[\Det(I+\lambda A)=\exp\left(\sum\limits_{m=1}^{\infty}\frac{(-1)^{m+1}}{m} \lambda ^m \sum\limits_{j=0}^{\infty}d_{j}(1+\lambda_j)^{-\frac{\alpha m}{\nu}}\right),  \]
for $\lambda\in \mathbb{C}$ with  $|\lambda|$ small enough and where  $\lambda_j$ denotes the eigenvalues of $E$ with multiplicities $d_j$.
\end{cor}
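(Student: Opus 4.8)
The plan is to exhibit $A=(I+E)^{-\alpha/\nu}$ as a trace-class Fourier multiplier with respect to the eigenspace decomposition of $E$, and then to read off the formula directly from Theorem \ref{thm1a}.

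First I would set up the Fourier multiplier structure. Since $E\in\Psi_{+e}^{\nu}(M)$, the operator $I+E$ is again a positive, classical, elliptic pseudo-differential operator of order $\nu$ with spectrum $\{1+\lambda_j\}_{j\ge 0}$, all $1+\lambda_j>0$, and by Seeley \cite{Seeley:complex-powers-1967} its complex powers are well defined; in particular $A=(I+E)^{-\alpha/\nu}\in\Psi^{-\alpha}_{cl}(M)$. Writing $L^2(M)=\bigoplus_{j}H_j$ for the spectral decomposition of $E$ into its (distinct) eigenspaces, with $E|_{H_j}=\lambda_j I$, $d_j=\dim H_j<\infty$, and $\{e_j^k\}$ an orthonormal basis of eigenfunctions, the functional calculus gives $A(H_j)\subset H_j$ with $A|_{H_j}=(1+\lambda_j)^{-\alpha/\nu}I_{H_j}$. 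Hence, by Theorem \ref{THM:inv-rem} (the condition (A) being verified), $A$ is an invariant operator, i.e.\ a Fourier multiplier relative to $\{H_j\}$, whose matrix symbol is the scalar matrix
\[
\sigma_A(j)=(1+\lambda_j)^{-\alpha/\nu}\,I_{d_j}\in\mathbb{C}^{d_j\times d_j},\qquad j\in\N_0.
\]

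Next I would verify that $A\in S_1(L^2(M))$. Being positive and self-adjoint, $A$ has singular values equal to its eigenvalues $(1+\lambda_j)^{-\alpha/\nu}$, each repeated $d_j$ times, so $\|A\|_{S_1}=\sum_{j}d_j(1+\lambda_j)^{-\alpha/\nu}$. By the Weyl asymptotic law for the elliptic operator $E$ of order $\nu$ on the $n$-dimensional manifold $M$, the counting function $N(\lambda):=\sum_{\lambda_j\le\lambda}d_j$ grows like $\lambda^{n/\nu}$; equivalently, listing the eigenvalues with multiplicity as $0<\mu_1\le\mu_2\le\cdots$, one has $\mu_k\sim c\,k^{\nu/n}$. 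Therefore $\sum_{j}d_j(1+\lambda_j)^{-\alpha/\nu}=\sum_k(1+\mu_k)^{-\alpha/\nu}\sim c'\sum_k k^{-\alpha/n}$, which is finite precisely because $\alpha>n$. (Alternatively one may simply quote the standard fact that a classical operator in $\Psi^{-\beta}_{cl}(M)$ lies in $S_1(L^2(M))$ whenever $\beta>n$.) The same computation with $\alpha$ replaced by $\alpha m$ shows that $A^m\in S_1(L^2(M))$ for every $m\ge1$ and that all the inner series below converge absolutely, a fortiori, since $\alpha m\ge\alpha>n$.

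Finally I would apply Theorem \ref{thm1a}. For each $\ell$ and each $m\ge1$ we have $\sigma_A(\ell)^m=(1+\lambda_\ell)^{-\alpha m/\nu}I_{d_\ell}$, hence $\Tr(\sigma_A(\ell)^m)=d_\ell(1+\lambda_\ell)^{-\alpha m/\nu}$ and $\sum_{\ell=0}^{\infty}\Tr(\sigma_A(\ell)^m)=\sum_{j=0}^{\infty}d_j(1+\lambda_j)^{-\alpha m/\nu}$. Substituting this into the formula of Theorem \ref{thm1a} yields exactly the asserted identity, valid for $|\lambda|$ small enough (exactly as in Theorem \ref{thm1a}). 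The only genuinely delicate point is the trace-class membership, i.e.\ pinning down the sharp threshold $\alpha>n$ through Weyl's law; once the Fourier multiplier structure of $A$ is in hand, the remainder is bookkeeping.
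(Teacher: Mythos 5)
Your proof is correct and follows essentially the same route as the paper: recognize $A$ as an invariant operator relative to the eigenspace decomposition of $E$ with scalar symbol $\sigma_A(j)=(1+\lambda_j)^{-\alpha/\nu}I_{d_j}$, and apply Theorem \ref{thm1a}. The only difference is cosmetic: the paper justifies $A\in S_1(L^2(M))$ by citing Proposition 3.3 of \cite{dr:suffkernel}, whereas you derive it directly from Weyl's law, which is equally valid.
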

\begin{proof} The fact that $A\in S_1(L^2(M))$ follows from (cf. \cite{dr:suffkernel}, Proposition 3.3). It is clear that $A$ is invariant relative to $E$. Now, the eigenvalues of $(I+E)^{-\frac{\alpha m}{\nu}}$ are $(1+\lambda_j)^{-\frac{\alpha m}{\nu}}$ with the same multiplicities $d_j$ of $E$. The result now follows from Theorem \ref{thm1a}.
\end{proof}

\subsection{Determinants on homogeneous vector bundles in $L^2(E)$}\label{sec6}
In this section, let us denote by $E$ the total space of a vector bundle $p:E\rightarrow M$ over a manifold. For a fixed vector space $V,$ $\{e_{i,V}\}_{1\leq i\leq \dim(V) }$ denotes an orthonormal  basis of $V.$

Let $G$ be a compact Lie group, $dx$ its Haar measure and $\widehat{G}$ its unitary dual. In this section we study the determinant of $G$-invariant operators on a homogeneous vector bundle over a homogeneous manifold $M=G/K$ (where $G$ is a  Lie group and $K$ is a closed subgroup of $G$). 

The spectral analysis of $G$-invariant operators on a homogeneous vector bundle $p:E\rightarrow M$ (we will introduce it later) in $L^2(E)$  can be reduced to the spectral  analysis of  left-invariant operators on $L^2(G,E_0),$ (with $E_0=p^{-1}(K)$). We will explain this connection in detail. If $K_{A}\in \mathscr{D}'(G,\textnormal{End}(E_0)),$ is the convolution kernel of a left-invariant operator $A$ on $L^2(G,E_0),$ then
\begin{equation*}
    Af(x)=\int\limits_{G}K_{A}(y^{-1}x)f(y)dy,\,\,\,f\in L^2(G,E_0).\,\,\,\,\,\,\,\,\,\,
\end{equation*}
Let $B_{E_0}=\{e_{i,E_0}\}_{i=1}^{d_\tau},$ $d_\tau=\dim(E_0),$ be an orthonormal basis of $E_0.$ For every $f\in C^{\infty}(G,E_0),$ we can write:
$
    f(x)=\sum_{i=1}^{d_\tau}f_{i}(x)\,e_{i,E_0},$ with $f_{i}(x):=(f(x),e_{i,E_0})_{E_0}.$ The Peter-Weyl theorem implies
\begin{equation}
    f(x)=\sum_{i=1}^{d_\tau}\sum_{[\xi]\in \widehat{G}}d_{\xi}\textnormal{Tr}[\xi(x)\widehat{f}_{i}(\xi)   ]e_{i,E_0}.\,\,\,\,\,\,\,\,\,\,\,\,\,\,\,\,\,\,\,\,\,\,\,\,\,\,\,\,\,\,
\end{equation}
Here $\widehat{f}_{i}(\xi)$ denotes the Fourier transform of $f_{i}$ at $\xi,$ which is defined by
$$ \widehat{f}_{i}(\xi)\equiv (\mathcal{F}_{G}f_{i})(\xi):=\int\limits_{G}f_i(x)\xi(x)^{*}dx,\,\,[\xi]\in \widehat{G}.\,\,\,\,\,\,  $$
 The quantisation process in \cite{rt:book}  allows us to construct a matrix-symbol $\sigma_A:\{1,2,\cdots, d_\tau\}^2\times \widehat{G}\rightarrow \cup_{[\xi]\in \widehat{G}}\mathbb{C}^{d_\xi\times d_\xi},$ such that
\begin{equation}\label{vector-valuedq}
    Af(x)=\sum_{i,r=1}^{d_\tau}\sum_{[\xi]\in \widehat{G}}d_{\xi}\textnormal{Tr}[\xi(x)\sigma_A(i,r,\xi)\widehat{f}_{i}(\xi)   ]e_{r,E_0}.\,\,\,\,\,\,\,\,\,\,\,
\end{equation}
Indeed,
$$Af(x)
   =\sum_{i=1}^{d_\tau}\int\limits_{G}K_{A}(y^{-1}x)f_i(y)dy=\sum_{i=1}^{d_\tau}\sum_{[\xi]\in \widehat{G}}\int\limits_{G}d_{\xi}\textnormal{Tr}[\xi(y)\widehat{f}_{i}(\xi)   ]K_{A}(y^{-1}x)e_{i,E_0}dy$$
    $$=\sum_{i=1}^{d_\tau}\sum_{r=1}^{d_\tau}\sum_{[\xi]\in \widehat{G}}\int\limits_{G}d_{\xi}\textnormal{Tr}[\xi(y)\widehat{f}_{i}(\xi)   ](K_{A}(y^{-1}x)e_{i,E_0},e_{r,E_0})_{E_0}e_{r,E_0}dy.
$$If $A_{ir}$ is the operator with right-convolution kernel $K_{A}^{i,r}(z):=(K_{A}(z)e_{i,E_0},e_{r,E_0})_{E_0}\in \mathscr{D}'(G),$ then 
 \begin{align*}
      Af(x)=\sum_{i=1}^{d_\tau}\sum_{r=1}^{d_\tau}\int\limits_{G}K_{A}^{i,r}(y^{-1}x)f_{i}(y)e_{r,E_0}dy=\sum_{i=1}^{d_\tau}\sum_{r=1}^{d_\tau}A_{ir}f_{i}e_{r,E_0}.
 \end{align*} Denoting  $$\sigma^{i,r}_{A}:=\sigma_{A}(i,r,\cdot,\cdot)\equiv (\mathcal{F}_{G}K_{A}^{i,r}):\widehat{G}\rightarrow \cup_{[\xi]\in \widehat{G}}\mathbb{C}^{d_\xi\times d_\xi},$$ the matrix-valued symbol of $A_{ir}$ (see  \cite{rt:book} for details), we have 
\begin{equation*}
    A_{ir}f_i(x)=\sum_{[\xi]\in \widehat{G}}d_{\xi}\textnormal{Tr}[\xi(x)\sigma_A(i,r,\xi)\widehat{f}_{i}(\xi)   ],
\end{equation*}from which we deduce \eqref{vector-valuedq}. 
In the next lemma we study the composition of vector valued left-invariant operators.
\begin{lem}
If ${A}$ and ${B}$ are left-invariant operators and bounded on $L^2(G,E_0),$ then $AB$ is also a left-invariant operator and bounded on $L^2(G,E_0)$ and its symbol is given by 
\begin{equation}\label{compositionAandB}
    \sigma_{BA}(i, s,\xi) =\sum_{r=1}^{d_\tau}\sigma_{B}(r, s, \xi) \sigma_{A}(i, r, \xi),\,\,1\leq i,s\leq d_\tau.
\end{equation}
\end{lem}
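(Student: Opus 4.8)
The plan is to reduce the composition of the $\textnormal{End}(E_0)$-valued left-invariant operators $A$ and $B$ to compositions of their scalar matrix entries $A_{ir}$ and $B_{rs}$ — which are ordinary left-invariant operators on $L^2(G)$ — and then to invoke the scalar composition formula of the symbolic calculus on compact Lie groups from \cite{rt:book}. First I would dispose of boundedness and left-invariance of the composition $BA$ (that is, the operator $f\mapsto B(Af)$, written $AB$ in the statement): one has $\Vert BA\Vert_{L^2(G,E_0)\to L^2(G,E_0)}\leq \Vert A\Vert\,\Vert B\Vert<\infty$, and, writing $L_z$ for the left translation $(L_zf)(x)=f(z^{-1}x)$, the identities $AL_z=L_zA$ and $BL_z=L_zB$ immediately give $(BA)L_z=L_z(BA)$. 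Hence $BA$ is left-invariant, so it carries an $\textnormal{End}(E_0)$-valued convolution kernel and, through \eqref{vector-valuedq}, a matrix symbol $\sigma_{BA}(i,s,\cdot)$, $1\leq i,s\leq d_\tau$.

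To compute that symbol I would decompose $f=\sum_{i=1}^{d_\tau}f_i\,e_{i,E_0}$ with $f_i=(f,e_{i,E_0})_{E_0}$. By the discussion preceding the lemma, the $r$-th component of $Af$ is $(Af)_r=\sum_{i=1}^{d_\tau}A_{ir}f_i$, where $A_{ir}$ is the scalar left-invariant operator with right-convolution kernel $K_A^{i,r}$ and symbol $\sigma_A(i,r,\cdot)=\mathcal{F}_GK_A^{i,r}$, and likewise $(Bg)_s=\sum_{r=1}^{d_\tau}B_{rs}g_r$. Composing,
\begin{equation*}
\big(B(Af)\big)_s=\sum_{r=1}^{d_\tau}B_{rs}\Big(\sum_{i=1}^{d_\tau}A_{ir}f_i\Big)=\sum_{i=1}^{d_\tau}\Big(\sum_{r=1}^{d_\tau}B_{rs}\circ A_{ir}\Big)f_i ,
\end{equation*}
so the scalar $(i,s)$-entry of $BA$ is the left-invariant operator $\sum_{r=1}^{d_\tau}B_{rs}\circ A_{ir}$ on $L^2(G)$. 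Each $B_{rs}\circ A_{ir}$ is bounded and left-invariant, so the scalar composition rule of \cite{rt:book} — the symbol of a composition of left-invariant operators is the pointwise matrix product of the symbols — gives $\sigma_{B_{rs}\circ A_{ir}}(\xi)=\sigma_B(r,s,\xi)\,\sigma_A(i,r,\xi)$. Summing over $r$ and comparing with \eqref{vector-valuedq} then produces \eqref{compositionAandB}.

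The main point requiring care is the interface between the vector-bundle quantisation \eqref{vector-valuedq} and the scalar calculus of \cite{rt:book}: one must check that each $A_{ir}$ (and $B_{rs}$) is genuinely bounded and left-invariant on $L^2(G)$ — which holds since it factors as $f_i\mapsto f_i e_{i,E_0}\mapsto A(f_i e_{i,E_0})\mapsto (A(f_i e_{i,E_0}),e_{r,E_0})_{E_0}$, a composition of bounded linear maps commuting with left translations — and that the matrix symbol assembled entry-by-entry above is the symbol of $BA$ read off directly from its $\textnormal{End}(E_0)$-valued convolution kernel; this last identification is essentially the observation that the decomposition $f=\sum_i f_i e_{i,E_0}$ is compatible with convolution against $\textnormal{End}(E_0)$-valued kernels, and it causes no trouble because $d_\tau=\dim(E_0)<\infty$. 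Everything else is index bookkeeping together with the already-established scalar composition formula.
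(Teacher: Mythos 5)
Your proposal is correct and follows essentially the same route as the paper: decompose $f=\sum_i f_i e_{i,E_0}$, reduce the composition to the scalar entries via $(Af)_r=\sum_i A_{ir}f_i$, so that the $(i,s)$-entry of $BA$ is $\sum_r B_{rs}\circ A_{ir}$, and then apply the scalar composition rule $\sigma_{B_{rs}A_{ir}}(\xi)=\sigma_B(r,s,\xi)\,\sigma_A(i,r,\xi)$ for left-invariant operators from \cite{rt:book}. The only difference is that you spell out the boundedness and left-invariance of $BA$ and of the entries $A_{ir}$, which the paper leaves implicit; this is a harmless (and welcome) addition rather than a different argument.
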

\begin{proof}
Observe that 
 \begin{align*}
    (B\circ A) f(x):= B[Af](x)&= \sum_{r=1}^{d_\tau} \sum_{s=1}^{d_\tau} B_{rs}(Af)_r(x) e_{s, E_0}. 
 \end{align*}
 Now, note that 
 \begin{align*}
     (Af)_r(x) := \langle Af(x), e_{r, F_0} \rangle &= \left\langle \sum_{i=1}^{d_\tau} \sum_{r'=1}^{d_\tau} A_{ir'} f_i(x) e_{r', F_0}, e_{r, F_0} \right\rangle \\&= \sum_{i=1}^{d_\tau}  A_{ir} f_i(x).
 \end{align*}
 Therefore, we obtain
 \begin{align*}
     B[Af](x)&= \sum_{r=1}^{d_\tau} \sum_{s=1}^{d_\tau} B_{rs} \left[\sum_{i=1}^{d_\tau}  A_{ir} f_i(x) \right] e_{s, E_0} \\&= \sum_{r=1}^{d_\tau} \sum_{s=1}^{d_\tau} \sum_{i=1}^{d_\tau} B_{rs} A_{ir} f_i(x)  e_{s, E_0} \\&=\sum_{r=1}^{d_\tau} \sum_{s=1}^{d_\tau} \sum_{i=1}^{d_\tau} \sum_{[\xi] \in \widehat{G}} d_\xi \textnormal{Tr}[\xi(x) \sigma_{B_{rs} A_{ir}}( \xi) \widehat{f}_i(\xi)]   e_{s, E_0} \\&=  \sum_{s=1}^{d_\tau} \sum_{i=1}^{d_\tau} \sum_{[\xi] \in \widehat{G}} d_\xi \textnormal{Tr}[\xi(x) \sum_{r=1}^{d_\tau} \sigma_{B_{rs} A_{ir}}( \xi) \widehat{f}_i(\xi)]   e_{s, E_0}.
 \end{align*}
 By the uniqueness of the symbol, we deduce  that 
 $$\sigma_{BA}(i,s, \xi)= \sum_{r=1}^{d_\tau} \sigma_{B_{rs} A_{ir}}( \xi).$$ By observing that $\sigma_{B_{rs} A_{ir}}( \xi)=\sigma_{B}(r, s, \xi) \sigma_{A}(i, r, \xi),$ for all $1\leq i,s\leq d_\tau,$ we finish the proof.
\end{proof}

By following \cite{Bott65},  we say that a vector bundle $p:E\rightarrow M$ is homogeneous, if the following two conditions are satisfied,
\begin{itemize}
    \item[(i)] $G$ acts from the left on $E$ and for every $g\in G,$ $g\cdot E_{x}=E_{g\cdot x},$ $x\in M.$ 
    \item[(ii)] For every  $g \in G,$ the induced map from $E_{g}$ into $E_{g\cdot x},$ is linear. 
\end{itemize}
As usual, a section $s$ on $E$ is defined by the identity $p\circ s=\textnormal{id}_{M}.$  The set of smooth sections on $E$ will be denoted by $\Gamma(E).$ $L^2(E)$ denotes the set of square-integrable sections with respect to a normalised $G$-invariant volume form. 

We record that a continuous linear  operator $\tilde{A}:\Gamma(E)\rightarrow \Gamma(E)$ is homogeneous, if it is  invariant under the action of $G$ on the space of section $\Gamma(E)$, that is,
\begin{equation}\label{homogenesousdefinition}
    \tilde{A}(g\cdot s)=g\cdot (\tilde{A}s),\,\,s\in \Gamma(E).
\end{equation} 
In that follows, we will use a standard construction (see \cite{Wallach1973}) that allows to identify the set of smooth sections on a homogeneous vector bundle  $p:E\rightarrow M$ with the subclass of vector-valued smooth functions on $E_{0}=p^{-1}(K),$ defined by
\begin{equation}
    C^\infty(G,E_0)^\tau=\{f\in C^\infty(G,E_0)\,\, |\forall g\in G, \,k\in K,\,\,f(gk)=\tau(k)^{-1}f(g)  \},
\end{equation}for some representation $\tau$ of $K.$ Under this identification, we will be able to study the determinant of an (homogenoeus) invariant operator $\tilde{A},$ by using the notion of matrix-valued symbols developed in \cite{rt:book}.  
\begin{ex}[{\bf Bott's construction}]\label{BottConstruction}  Let us consider the vector bundle $p:E\rightarrow M=G/K,$ with $E=G\times_{\tau} E_{0},$ that is, the semi-direct product between $G$ and $E_{0}$ with respect to $\tau: K\rightarrow \textnormal{GL}(E_0),$ where  $E_0$ is the representation space of $K$ associated with $\tau.$ In view of the compactness of $K,$ we can assume that $E_0=\mathbb{C}^{d_\tau}$    is the complex vector space of finite dimension $d_\tau$. We recall that  the semi-direct product $G\times_{\tau}E_0$ is the set of all cosets $(g,v)\cdot K=:[g,v],$ $g\in G,$ $v\in E_{0},$ defined by the right action of $K$ on $G\times E_{0}, $ $(g,v)\cdot k=(gk,\tau(k)^{-1}v).$ Consequently,  $(g,v)\cdot K=\{(g,v)\cdot k:k\in K\}.$ The projection $p:E\rightarrow M,$ is given by $p((g,v)\cdot K)=gK,$ and $p:E\rightarrow M=G/K,$  has a natural structure of a homogeneous vector bundle. 
\end{ex}
Now, let us record that every homogeneous vector bundle can be constructed (up to isomorphism) in the way described in the previous example  (see \cite{Wallach1973}).
\begin{thm}Let $p:E\rightarrow M=G/K$ be a homogeneous vector bundle. Let $E_0=p^{-1}(K)$ be the fiber at the identity. Then, there exists a representation $\tau:K\rightarrow \textnormal{GL}(E_0)$ of $K,$ such that $E\cong G\times_{\tau}E_0.$ Moreover,  the mapping $\varkappa_\tau:\Gamma(E)\rightarrow C^\infty(G,E_{0})^{\tau}, $ given by $$\varkappa_\tau(s)(g):=g^{-1}\cdot s(gK),$$ extends to a unitary mapping $\varkappa_\tau:L^2(E)\rightarrow L^2(G,E_{0})^{\tau}.$
\end{thm}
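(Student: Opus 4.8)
The plan is to establish the isomorphism $E \cong G \times_\tau E_0$ first, and then prove that the map $\varkappa_\tau$ is a well-defined, $G$-equivariant unitary.

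First I would construct the representation $\tau$. Since $K$ acts linearly on the fiber $E_0 = p^{-1}(K)$ over the identity coset $eK$ (this is condition (ii) in the definition of a homogeneous bundle, applied to $g \in K$, which stabilizes $eK$), we get a homomorphism $\tau: K \to \textnormal{GL}(E_0)$ by setting $\tau(k)v := k \cdot v$ for $v \in E_0$. Smoothness of $\tau$ follows from smoothness of the $G$-action on $E$. Next I would exhibit the bundle isomorphism $\Phi: G \times_\tau E_0 \to E$ by $\Phi([g,v]) := g \cdot v$; this is well-defined on cosets since $\Phi((gk, \tau(k)^{-1}v)) = (gk)\cdot(\tau(k)^{-1}v) = g\cdot(k \cdot \tau(k)^{-1}v) = g \cdot v$, using that the $G$-action is a left action. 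One checks $\Phi$ covers the identity on $M = G/K$ (indeed $p(g\cdot v) = g \cdot p(v) = g\cdot eK = gK$), is fiberwise linear, and is a diffeomorphism; surjectivity uses that $G$ acts transitively on $M$ so every fiber $E_x$ equals $g \cdot E_0$ for a suitable $g$, and injectivity reduces to the fact that the stabilizer action of $K$ on $E_0$ is faithful modulo the coset relation, which is exactly how the semidirect product is set up.

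Then I would analyze $\varkappa_\tau$. Given $s \in \Gamma(E)$, the function $\varkappa_\tau(s)(g) = g^{-1}\cdot s(gK)$ lands in $E_0$ because $s(gK) \in E_{gK} = g \cdot E_0$, so $g^{-1}$ maps it back to $E_0$. To see the $\tau$-equivariance defining $C^\infty(G,E_0)^\tau$: for $k \in K$,
\begin{equation*}
\varkappa_\tau(s)(gk) = (gk)^{-1}\cdot s(gkK) = k^{-1}\cdot g^{-1}\cdot s(gK) = \tau(k)^{-1}\varkappa_\tau(s)(g),
\end{equation*}
using $gkK = gK$. Smoothness is clear from smoothness of $s$ and of the action. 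Conversely, any $f \in C^\infty(G,E_0)^\tau$ defines a section by $s(gK) := g \cdot f(g)$, which is well-defined precisely because of the $\tau$-equivariance, and this inverts $\varkappa_\tau$; hence $\varkappa_\tau$ is a linear bijection $\Gamma(E) \to C^\infty(G,E_0)^\tau$. The unitary extension to $L^2$ then follows by a density argument once one checks that $\varkappa_\tau$ is an isometry: this is where one uses that the volume form on $E$ is the normalized $G$-invariant one and that the inner product on $E_0$ (and the fiberwise metrics on $E$) are $G$-invariant, so that $\|s(gK)\|_{E_{gK}} = \|g^{-1}\cdot s(gK)\|_{E_0} = \|\varkappa_\tau(s)(g)\|_{E_0}$; integrating over $G$ against Haar measure, which pushes forward to the invariant measure on $M$, gives $\|\varkappa_\tau(s)\|_{L^2(G,E_0)} = \|s\|_{L^2(E)}$.

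The main obstacle is really bookkeeping rather than depth: one must fix $G$-invariant Hermitian metrics on the fibers of $E$ compatible with the chosen inner product on $E_0$ (such metrics exist by averaging over the compact group $K$), and verify carefully that the normalization of the $G$-invariant volume form on $E$ (equivalently, on $M$) matches the normalized Haar measure on $G$ under the quotient map $G \to G/K$, so that no Jacobian factor appears in the isometry computation. Once the measures and metrics are normalized consistently, the isometry identity is immediate and density of smooth sections in $L^2(E)$ finishes the argument. Both the existence-of-$\tau$ part and the unitarity part are essentially the standard Mackey-type correspondence (see \cite{Wallach1973}), so I would invoke that reference for the routine verifications and only spell out the equivariance and isometry identities above.
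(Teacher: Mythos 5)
Your proposal is correct and is the standard argument: the paper itself does not prove this theorem but records it as a known fact, citing \cite{Wallach1973}, and your construction of $\tau$ from the stabilizer action of $K$ on $E_0$, the isomorphism $[g,v]\mapsto g\cdot v$, the $\tau$-equivariance computation for $\varkappa_\tau$, and the isometry via a $G$-invariant fiber metric and the push-forward of Haar measure to $G/K$ is precisely the classical proof found in that reference. No gaps; the only caveat (which you already flag) is that unitarity presupposes the $K$-invariant inner product on $E_0$ and consistent normalizations of the measures, which is implicit in the paper's setup.
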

From here, let us make the identification $E\cong G\times_{\tau} E_0.$ Observe that  a bounded linear operator  $\tilde{A}:L^2(E)\rightarrow L^2(E)$  is unitarily equivalent to the bounded linear operator on $L^2(G,E_0)^{\tau}$ defined by 
\begin{equation}\label{unitarilyequivaklent}
    A:=\varkappa_\tau\circ \tilde{A}\circ \varkappa_\tau^{-1}.
\end{equation}  If $\tilde{A}$ is $G$-invariant  then ${A}$ is left-invariant. Moreover, the stability of the spectrum under unitary transformations implies that $\tilde{A}\in S_{1}(L^2(E),$ if and only if, ${A}\in S_{1}(L^2(G,E_0)^\tau),$ and the analysis in \cite{dr14a:fsymbsch} implies that 
\begin{equation}\label{TrA}
   \Tr({A})=\sum_{[\xi]\in \widehat{G}, \,1\leq i\leq d_\tau}d_\xi\Tr(\sigma_A(i,i,\xi))= \Tr(\tilde{A}).
\end{equation}

For our further analysis we need to compute the symbol of an integer power $A^{m}$ of a left-invariant operator $A.$ It will be done in the following lemma.
\begin{lem}\label{symbolAm} Let $m\in \mathbb{N}$ with $m\geq 2.$ Then, the symbol $\sigma_{A^m}$ of $A^{m}$ is given by
\begin{equation}\label{compositionAmtimes}
    \sigma_{A^m}(r_m,r_0,\xi)=\sum_{r_{1},r_2,\cdots,r_{m-1}=1}^{d_\tau}\prod_{j=1}^{m}\sigma_{A}(r_{j},r_{j-1},\xi),\,\,\,1\leq r_m,r_0\leq d_\tau.
\end{equation}
    
\end{lem}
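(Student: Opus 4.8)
The plan is to prove the composition formula \eqref{compositionAmtimes} by induction on $m\geq 2$, using the two-index composition rule \eqref{compositionAandB} as the base case and as the inductive engine. First I would record the base case $m=2$: applying \eqref{compositionAandB} with $B=A$ gives
\[
\sigma_{A^2}(r_2,r_0,\xi)=\sum_{r_1=1}^{d_\tau}\sigma_A(r_1,r_0,\xi)\,\sigma_A(r_2,r_1,\xi)=\sum_{r_1=1}^{d_\tau}\prod_{j=1}^{2}\sigma_A(r_j,r_{j-1},\xi),
\]
which matches the claimed formula (note the index ordering in \eqref{compositionAandB}: the outer operator's symbol carries the larger "block'' index, so in $A^m=A\circ A^{m-1}$ the factor $\sigma_A$ sits on the left in the matrix product and the $A^{m-1}$ symbol on the right).

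Next I would carry out the inductive step. Assume \eqref{compositionAmtimes} holds for $m-1$, i.e.
\[
\sigma_{A^{m-1}}(r_{m-1},r_0,\xi)=\sum_{r_1,\dots,r_{m-2}=1}^{d_\tau}\prod_{j=1}^{m-1}\sigma_A(r_j,r_{j-1},\xi).
\]
Write $A^m=A\circ A^{m-1}$ and apply the composition lemma \eqref{compositionAandB} with the roles $B=A$ and (in place of $A$) the operator $A^{m-1}$, which is again left-invariant and bounded since the left-invariant bounded operators on $L^2(G,E_0)$ form an algebra. This yields
\[
\sigma_{A^m}(r_m,r_0,\xi)=\sum_{r_{m-1}=1}^{d_\tau}\sigma_A(r_m,r_{m-1},\xi)\,\sigma_{A^{m-1}}(r_{m-1},r_0,\xi).
\]
Substituting the inductive hypothesis and merging the single sum over $r_{m-1}$ with the multiple sum over $r_1,\dots,r_{m-2}$ into one sum over $r_1,\dots,r_{m-1}$, and absorbing the factor $\sigma_A(r_m,r_{m-1},\xi)$ as the $j=m$ term of the product, gives exactly
\[
\sigma_{A^m}(r_m,r_0,\xi)=\sum_{r_1,\dots,r_{m-1}=1}^{d_\tau}\prod_{j=1}^{m}\sigma_A(r_j,r_{j-1},\xi),
\]
completing the induction.

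The only genuinely delicate point is bookkeeping of the index conventions: one must check that \eqref{compositionAandB} is being applied with $A^{m-1}$ substituted consistently for the inner operator, so that the $r_{m-1}$ index produced by one application of the lemma is the same index that appears as the first slot of $\sigma_{A^{m-1}}$, and that the matrix (as opposed to block) multiplication of the $d_\xi\times d_\xi$ symbols is carried along silently inside the product $\prod_{j=1}^m$. Since \eqref{compositionAandB} was proved for arbitrary left-invariant bounded $A,B$, no regularity or boundedness issue arises beyond noting closure of the algebra under composition. I expect no real obstacle; the proof is a routine induction, and I would present it compactly as above.
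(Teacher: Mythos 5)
Your overall strategy coincides with the paper's: induct on $m$ using the composition rule \eqref{compositionAandB}, with $m=2$ as the base case, and your base case is written correctly. The flaw is in the inductive step, and it is exactly the bookkeeping issue you yourself flagged as the only delicate point. The displayed identity
\begin{equation*}
\sigma_{A^m}(r_m,r_0,\xi)=\sum_{r_{m-1}=1}^{d_\tau}\sigma_A(r_m,r_{m-1},\xi)\,\sigma_{A^{m-1}}(r_{m-1},r_0,\xi)
\end{equation*}
does not follow from \eqref{compositionAandB} with the roles you assign, and is false in general. With $B=A$ as the outer factor and $A^{m-1}$ as the inner one, \eqref{compositionAandB} gives $\sigma_{A\circ A^{m-1}}(i,s,\xi)=\sum_{r}\sigma_A(r,s,\xi)\,\sigma_{A^{m-1}}(i,r,\xi)$, i.e.\ with $i=r_m$, $s=r_0$ the new factor is $\sigma_A(r_1,r_0,\xi)$, sitting on the left; your parenthetical that the outer symbol carries the larger block index is backwards, since the outer symbol carries the output index, which in the chain $r_m\to r_{m-1}\to\cdots\to r_0$ is the smallest label. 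If instead you decompose as $A^m=A^{m-1}\circ A$, the lemma gives $\sum_{r_{m-1}}\sigma_{A^{m-1}}(r_{m-1},r_0,\xi)\,\sigma_A(r_m,r_{m-1},\xi)$, which has the block indices of your display but with the two matrix factors in the opposite order. The $\sigma$'s are $d_\xi\times d_\xi$ matrices that need not commute, so this reversal is not harmless; your final formula only comes out right because the next step --- absorbing $\sigma_A(r_m,r_{m-1},\xi)$ as the $j=m$, i.e.\ rightmost, factor --- silently reverses the order a second time.

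The repair is immediate and either correct decomposition works. The paper inducts from $m$ to $m+1$ via $A^{m+1}=A\circ A^m$, obtaining $\sum_{r_1}\sigma_A(r_1,r_0,\xi)\,\sigma_{A^m}(r_{m+1},r_1,\xi)$ and prepending the new factor on the left as the new first term after shifting the dummy indices in the inductive hypothesis up by one. Equivalently, with $A^m=A^{m-1}\circ A$ you append $\sigma_A(r_m,r_{m-1},\xi)$ on the right as the $j=m$ factor, which is what your ``merging'' step actually requires. Either route yields \eqref{compositionAmtimes} with the product ordered so that $j$ increases from left to right. Your remaining points --- that $A^{m-1}$ is again bounded and left-invariant so \eqref{compositionAandB} applies to it --- are fine.
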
\begin{proof}For the proof we will use induction on $m.$ For $m=2,$ from \eqref{compositionAandB} we have
\begin{equation*}
   \sigma_{A^2}(r_2, r_0,\xi) =\sum_{r_1=1}^{d_\tau}\sigma_{A}(r_1, r_0, \xi) \sigma_{A}(r_2, r_1, \xi),\,\,1\leq r_2,r_0\leq d_\tau.  
\end{equation*}Now, let us fix  $m\in \mathbb{N}$ and let us assume that \eqref{compositionAmtimes} holds. From \eqref{compositionAandB} one has for $1\leq r_{m+1},r_0\leq d_\tau$ that,
\begin{align*}
    \sigma_{A^{m+1}}(r_{m+1},r_0,\xi)=\sum_{r_1=1}^{d_\tau}\sigma_{A}(r_1, r_0, \xi) \sigma_{A^m}(r_{m+1}, r_1, \xi).
\end{align*}In view of \eqref{compositionAandB} we have
\begin{equation*}
    \sigma_{A^m}(r_{m+1},r_{1},\xi)=\sum_{r_{2},r_3,\cdots,r_{m}=1}^{d_\tau}\prod_{j=1}^{m}\sigma_{A}(r_{j+1},r_{j},\xi),
\end{equation*}and consequently,
\begin{align*}
    \sigma_{A^{m+1}}(r_{m+1},r_0,\xi)&=\sum_{r_1=1}^{d_\tau}\sigma_{A}(r_1, r_0, \xi)\sum_{r_{2},r_3,\cdots,r_{m}=1}^{d_\tau}\prod_{j=1}^{m}\sigma_{A}(r_{j+1},r_{j},\xi)\\
    &=\sum_{r_1,r_{2},r_3,\cdots,r_{m}=1}^{d_\tau}\prod_{j=0}^{m}\sigma_{A}(r_{j+1},r_{j},\xi)\\
    &=\sum_{r_1,r_{2},r_3,\cdots,r_{m}=1}^{d_\tau}\prod_{j=1}^{m+1}\sigma_{A}(r_{j},r_{j-1},\xi).
\end{align*}Thus, we end the proof.
\end{proof} Now, we prove our determinant formula for $G$-invariant operators on homogeneous vector bundles.
\begin{thm}Let $\tilde{A}$ be a homogeneous and bounded operator on $L^2(E).$ Then, $\tilde{A}\in S_{1}(L^2(E))$ if and only if ${A}\in S_{1}(L^2(G,E_0)^\tau),$ $\Det(I+\lambda \tilde{A})=\Det(I+\lambda {A}),$  and 
 one has
\begin{equation}\label{traceofhomogebeous}
    \Det(I+\lambda \tilde{A})=\exp\left(\sum\limits_{m=1}^{\infty}\frac{(-1)^{m+1}}{m}\lambda^m\sum_{  \begin{subarray}{l} 1\leq r_0=r_m\leq d_\tau,\,[\xi]\in \widehat{G}\\1\leq r_{1},r_{2},\cdots, r_{m-1}\leq d_\tau \end{subarray}}d_\xi\Tr\left(\prod_{j=1}^{m}\sigma_{A}(r_{j},r_{j-1},\xi)\right)  \right), 
\end{equation}
for $\lambda\in \mathbb{C}$ with  $|\lambda|$ small enough. 
\end{thm}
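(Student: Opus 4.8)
The plan is to reduce everything to the already-established Plemelj–Smithies formula (Theorem~\ref{thm0a}) and the trace identity for left-invariant operators together with the symbol formula for powers obtained in Lemma~\ref{symbolAm}. First I would record that the hypothesis that $\tilde{A}$ is homogeneous and bounded on $L^2(E)$ gives, via the unitary equivalence $A=\varkappa_\tau\circ\tilde{A}\circ\varkappa_\tau^{-1}$ in \eqref{unitarilyequivaklent}, a bounded left-invariant operator $A$ on $L^2(G,E_0)^\tau$. Since the spectrum is stable under unitary conjugation, $\tilde{A}\in S_1(L^2(E))$ if and only if $A\in S_1(L^2(G,E_0)^\tau)$, and the eigenvalues (with algebraic multiplicities) of $I+\lambda\tilde{A}$ and $I+\lambda A$ agree, so $\Det(I+\lambda\tilde{A})=\Det(I+\lambda A)$. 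This disposes of the first two assertions.

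Next I would apply the Plemelj–Smithies formula to $A\in S_1(L^2(G,E_0)^\tau)$, which is legitimate because $S_1$ is an ideal and hence $A^m\in S_1$ for all $m$, and moreover $A^m$ is again left-invariant (by the composition lemma, iterated). This yields
\[
\Det(I+\lambda A)=\exp\left(\sum_{m=1}^{\infty}\frac{(-1)^{m+1}}{m}\Tr(A^m)\lambda^m\right),
\]
for $|\lambda|$ small. Then I would compute $\Tr(A^m)$ using the trace formula \eqref{TrA} applied to the left-invariant operator $A^m$, namely $\Tr(A^m)=\sum_{[\xi]\in\widehat{G},\,1\le i\le d_\tau}d_\xi\,\Tr\!\big(\sigma_{A^m}(i,i,\xi)\big)$, and substitute the explicit symbol of $A^m$ from Lemma~\ref{symbolAm}, i.e. $\sigma_{A^m}(r_m,r_0,\xi)=\sum_{r_1,\dots,r_{m-1}=1}^{d_\tau}\prod_{j=1}^{m}\sigma_A(r_j,r_{j-1},\xi)$, specialized to $r_m=r_0=i$. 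Renaming $i$ as the common value $r_0=r_m$ and combining the sum over $i$ with the internal sum over $r_1,\dots,r_{m-1}$ produces exactly the double-indexed sum appearing in \eqref{traceofhomogebeous}. Plugging this expression for $\Tr(A^m)$ back into the exponential gives the claimed formula.

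The one point requiring a little care — and what I would flag as the main (mild) obstacle — is the interchange of the outer sum over $i$ with the trace and with the sum over $[\xi]\in\widehat{G}$, i.e. making sure that the rearranged multi-index sum $\sum_{1\le r_0=r_m\le d_\tau,\,[\xi]\in\widehat{G}}\sum_{1\le r_1,\dots,r_{m-1}\le d_\tau}d_\xi\Tr\big(\prod_{j=1}^m\sigma_A(r_j,r_{j-1},\xi)\big)$ is absolutely convergent and equals $\Tr(A^m)$. This follows since $A^m\in S_1$ and \eqref{TrA} is an absolutely convergent expression; the inner sums over $r_1,\dots,r_{m-1}$ and over matrix indices are finite, so no subtlety beyond the convergence already guaranteed by $S_1$-membership arises. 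Everything else is a routine substitution, and the radius of convergence in $\lambda$ is inherited from Theorem~\ref{thm0a}. I would then conclude the proof by asserting \eqref{traceofhomogebeous}.
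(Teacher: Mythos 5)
Your proof is correct and follows essentially the same route as the paper's: unitary equivalence via $\varkappa_\tau$ for the $S_1$-equivalence and equality of determinants, then the Plemelj--Smithies formula combined with the trace identity \eqref{TrA} applied to $A^m$ and the power-symbol formula of Lemma~\ref{symbolAm} to express $\Tr(A^m)$ as the multi-index sum in \eqref{traceofhomogebeous}. Your extra remark on the absolute convergence of the rearranged sum is a harmless refinement of the same argument.
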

\begin{proof} Observe that $\tilde{A}$ and $A$ have the same system of eigenvalues because of the stability of the spectrum  under unitary transformations.  This proves the first part of the theorem. By following the argument in the proof of Theorem \ref{thm1a}, for the proof of \eqref{traceofhomogebeous} we just need to compute $\Tr(A^m),$ for $m\in \mathbb{N}.$ In view of \eqref{TrA} we have,
\begin{equation}
   \Tr({A}^m)=\sum_{[\xi]\in \widehat{G}, \,1\leq r_{m}\leq d_\tau}d_\xi\Tr(\sigma_{A^m}(r_{m},r_m,\xi)).
\end{equation}But, by using Lemma  \ref{symbolAm} and the linearity of the trace we obtain
$$ \Tr(\sigma_{A^m}(r_{m},r_m,\xi))=\sum_{r_{1},r_2,\cdots,r_{m-1}=1}^{d_\tau}\Tr\left(\prod_{j=1}^{m}\sigma_{A}(r_{j},r_{j-1},\xi)\right) ,  $$ where in the right hand side,   $r_0=r_m,$
completing the proof.
\end{proof}

\end{document}